\newtheorem{theorem}{Theorem}[section]
\newtheorem{lemma}[theorem]{Lemma}
\newtheorem{proposition}[theorem]{Proposition}
\theoremstyle{definition}
\newtheorem{definition}[theorem]{Definition}
\newtheorem{example}[theorem]{Example}
\theoremstyle{remark}
\newtheorem{remark}[theorem]{Remark}
\numberwithin{equation}{section}
\newcommand{\abs}[1]{\lvert#1\rvert}
\def\fnote#1{\footnote}
\def\ignora#1{}
\def\n3#1{\left\vert  \! \left\vert \! \left\vert \, #1 \, \right\vert \!
  \right\vert \! \right\vert }
\newcommand{\Natural}{\mathbb N}
\newcommand{\Real}{\mathbb R}
\newcommand{\set}[1]{\left\{#1\right\}}
\newcommand{\cardinality}[1]{\abs{#1}}
\newcommand{\diam}{\mathop{\mathrm{diam}}\nolimits}
\newcommand{\norm}[1]{\left\Vert#1\right\Vert}
\newcommand{\closedball}[1]{B_{#1}}
\newcommand{\Free}{{\mathcal F}}
\newcommand{\Lip}{{\mathrm{Lip}}_0}
\newcommand{\conv}{\mathop\mathrm{conv}}
\begin{document}

\title{A characterisation of octahedrality in Lipschitz-free spaces}

\author{Anton\'in Proch\'azka}\thanks{The first author was partially supported by PEPS INSMI 2016.}
\address{Universit\'e Bourgogne Franche-Comt\'e, Laboratoire de Math\'ematiques UMR 6623, 16 route de Gray,
25030 Besan\c con Cedex, France}
\email{antonin.prochazka@univ-fcomte.fr}

\author{ Abraham Rueda Zoca }\thanks{The second author was supported by a research grant Contratos predoctorales FPU del Plan Propio del Vicerrectorado de Investigaci\'on y Transferencia de la Universidad de Granada and by Junta de Andaluc\'ia Grants FQM-0199.}
\address{Universidad de Granada, Facultad de Ciencias.
Departamento de An\'{a}lisis Matem\'{a}tico, 18071-Granada
(Spain) and Instituto de Matem\'aticas de la Universidad de Granada (IEMath-GR)} \email{ arz0001@correo.ugr.es}
\urladdr{\url{https://arzenglish.wordpress.com}}

\keywords{Octahedrality; Free spaces; Uniformly discrete metric spaces.}

\subjclass[2010]{ 46B04, 46B20, 46B85. }

\begin{abstract}
We characterise the octahedrality of Lipschitz-free space norm in terms of a new geometric property of the underlying metric space. 
We study the metric spaces with and without this property.
Quite surprisingly, metric spaces without this property cannot embed isometrically into $\ell_1$ and similar Banach spaces.
\end{abstract}

\maketitle
\markboth{ANTON\'IN PROCH\'AZKA AND ABRAHAM RUEDA ZOCA}{A CHARACTERISATION OF OCTAHEDRALITY IN LIPSCHITZ-FREE SPACES}

\section{Introduction}\label{sectintro}
\bigskip

The Lipschitz-free space $\Free(M)$ of a metric space $M$ is a Banach space such that every Lipschitz function on $M$ admits a canonical linear extension defined on $\Free(M)$ (for details see Section~\ref{s:preliminaries} and \cite{god2}).
This property leads naturally to the presence of isomorphic (sometimes even isometric) copies of $\ell_1$ in $\Free(M)$ when $M$ is infinite (see e.g. \cite{cdw,cj}).
At the same time, it is well known (\cite[Theorem II.4]{god}) that a Banach space $X$ contains an isomorphic copy of $\ell_1$ if and only if $X$ admits an equivalent octahedral norm.
Recall that the norm $\norm{\cdot}$ on a Banach space $X$ is \textit{octahedral} if, for every finite-dimensional subspace $Y\subseteq X$ and every $\varepsilon>0$, there exists $x\in S_X$ such that
$$\Vert y+\lambda x\Vert\geq (1-\varepsilon)(\Vert y\Vert+\vert \lambda\vert)$$
holds for every $y\in Y$ and every $\lambda\in\mathbb R$. Octahedrality can be regarded as a very strong negation of Fr\'echet differentiability of $\norm{\cdot}$ at every point of $X$.
It is thus natural to ask whether the Lipschitz-free space norm is always octahedral. 
This question has been treated recently by \cite[Theorem 2.4]{blrlip} who have shown that this is not necessarily always the case but it is sufficient, on the other hand, that the metric space be unbounded or non-uniformly discrete.

In this paper we introduce a new property of metric spaces, the \emph{long trapezoid property} (LTP), and show in Theorem \ref{t:circular} that the norm on $\Free(M)$ is octahedral if and only if $M$ has the LTP. This result thus joins the slowly growing group of isometric results on Lipschitz free spaces which permit to check a particular property of the metric space by looking only at the associated free space norm and vice versa. Namely we mean the characterization of the metric spaces whose free space is isometric to a subspace of $L_1(\mu)$ \cite{godard}, the characterization of the metric spaces whose free space is isometric to $\ell_1(\Gamma)$ \cite{dkp} and, to a certain extent, the characterization of the compact metric spaces whose free space enjoys the Daugavet property by \cite{ikw}.

It turns out that some easily identifiable classes of metric spaces always enjoy the LTP.
Apart from unbounded and non-uniformly discrete spaces that we have already mentioned, this is the case also for the infinite subsets of $\Real$-trees (Example \ref{ejepropiP}) and (more generally) infinite subsets of $\ell_1$. 
In an effort to understand the new LTP we study its permanence properties (Proposition~\ref{resulestaP}), taking an advantage of certain permanence properties for octahedral norms which might be new even in the general setting (Proposition~\ref{octa1sumanece}).
We give several examples of infinite metric spaces without the LTP (necessarily bounded and uniformly discrete). 
For instance, for every $\kappa \in \Real$ there is a $CAT(\kappa)$ space which admits a subset without the LTP, which contrasts with the above mentioned fact that subsets of $\Real$-trees have the LTP.
We also find examples of sets without the LTP in $\ell_p$ for every $1<p\leq \infty$ and in $c_0$ in Proposition \ref{p:ellp} but not in $\ell_1$. In fact, we prove that every infinite subset of a Banach space $X$ whose modulus of asymptotic uniform convexity is maximal enjoys the LTP (Proposition \ref{propauc}).

Finally, we give in Theorem \ref{teodifefrechet} a simple criterion in terms of the metric space $M$ for when some particular points of $\Free(M)$ might be points of Fr\'echet differentiability of the free space norm.

\section{Notation and preliminary results}\label{s:preliminaries}

\bigskip

We will consider only real Banach spaces. Given a Banach space $X$, we will denote the closed unit ball and the closed unit sphere by $B_X$ and $S_X$, respectively. We will also denote by $X^*$ the topological dual of $X$. By a \textit{slice of $B_X$} we will mean a set of the following form
$$S(B_X,f,\alpha):=\{x\in B_X: f(x)>1-\alpha\},$$
where $f\in S_{X^*}$ and $\alpha>0$. When $X$ is a dual Banach space, say $X=Y^*$, by a \textit{$w^*$-slice of $B_X$} we will mean the slice $S(B_X,y,\alpha)$ where $y\in S_Y$. A \textit{convex combination of slices of $B_X$} will be a set of the following form
$$\sum_{i=1}^n \lambda_i S_i,$$
where $\lambda_1,\ldots,\lambda_n$ are positive numbers such that $\sum_{i=1}^n \lambda_i=1$ and each $S_i$ is a slice of $B_X$. When $X$ is a dual Banach space, we will consider the concept of \textit{convex combination of $w^*$-slices of $B_X$} just replacing the concept of slice with the one of $w^*$-slice in the above definition. A subset $V\subseteq S_{X^*}$ is said to be \textit{norming} for $X^*$ if, for every $x^*\in X^*$, it follows $\Vert x^*\Vert=\sup\limits_{x\in V}x^*(x)$.

Given a pointed metric space $M$, that is, a metric space equipped with a distinguished point denoted by $0$, we will denote by $\Lip(M)$ the Banach space of all real Lipschitz functions defined on $M$ which vanish at $0$ under the standard Lipschitz norm
$$\Vert f\Vert:=\sup\left\{ \frac{\vert f(x)-f(y)\vert}{d(x,y)}: x\neq y\right\}.$$
It is known that $\Lip(M)$ is itself a dual Banach space (c.f. \cite{goka}). Indeed, for each $m\in M$, consider the function $\delta_m:\Lip(M)\to \mathbb R$ by $\delta_m(f)=f(m)$. Then $\delta_m\in \Lip(M)^*$ for all $m\in M$. Moreover, if we consider 
$$\mathcal F(M):=\overline{span}\{\delta_m: m\in M\}\subseteq \Lip(M)^*,$$
it follows that $\mathcal F(M)^*=\Lip(M)$. This space is known as the \textit{Lipschitz-free space over $M$} or simply the free space over $M$. 
It is usual to call the elements of $\Free(M)$ \emph{measures}. 
If $\mu=\sum_{i=1}^n a_i\delta_{x_i}$ with $x_i \in M\setminus\set{0}$ and $a_i\neq 0$ for all $1\leq i\leq n$, we will denote $supp(\mu):=\set{x_i:1\leq i\leq n}$, the \emph{support} of $\mu$.
Notice that the set $\left\{\frac{\delta_x-\delta_y}{d(x,y)}: x\neq y\right\}\subseteq S_{\mathcal F(M)}$ is norming for $\Lip(M)$. See \cite{god2} and references therein for background on Lipschitz-free spaces.

Given a Banach space $(X,\norm{\cdot})$, we will say that $\norm{\cdot}$ is octahedral if, for every $x_1,\ldots, x_n\in S_X$ and every $\varepsilon>0$, there exists $y\in S_X$ such that $\Vert x_i+y\Vert>2-\varepsilon$ holds for every $i\in\{1,\ldots, n\}$. It is proved in \cite[Proposition 2.1]{hlp} that this definition is equivalent to the one given by Godefroy introduced in Section \ref{sectintro}.

We will say that $(X,\norm{\cdot})$ has the \textit{strong diameter two property} (SD2P) if every convex combination of slices of $B_X$ has diameter two. When $X$ is a dual Banach space, we will consider the $w^*$ version of the above property defined in the natural way. It is known that a Banach space $(X,\norm{\cdot})$ is octahedral if, and only if, $(X^*,\norm{\cdot})$ has the $w^*$-SD2P \cite[Theorem 2.1]{blrocta}. Moreover, it is known that a Banach space admits an equivalent octahedral norm if and only if it contains an isomorphic copy of $\ell_1$ \cite{god}.

Given a Banach space $(X,\norm{\cdot})$ we say that $x\in X$ is a \textit{point of G\^ateaux differentiability of $X$} if the norm $\norm{\cdot}$ is G\^ateaux differentiable at $x$. By convexity of $\norm{\cdot}$, it is equivalent to the existence of the following limit 
$$\lim\limits_{t\rightarrow 0}\frac{\Vert x+th\Vert-\Vert x\Vert}{t}$$
for every $h\in X$.
We say that $x$ is a \textit{point of Fr\'echet differentiability of $X$} if the previous limit exists and it is uniform for $h\in S_X$. It is known that $x\in S_X$ is a point of G\^ateaux differentiability of $X$ if, and only if, there exists a unique $f\in S_{X^*}$ such that $f(x)=1$. 
Similarly, $x$ is a point of Fr\'echet differentiability of $X$ if, and only if, $\inf\limits_{\alpha>0}\diam(S(B_{X^*},x,\alpha))=0$, see \v Smulyan's lemma \cite[Corollary 7.20]{fhhmpz}.

\section{Main results}\label{s:main}

\bigskip

The next theorem is the main result of the paper.
The property of a metric space $M$ that appears in the point (\ref{hola3}) of the theorem will be called the \emph{long trapezoid property} (LTP) in the sequel. We can thus resume the theorem as ``a metric space $M$ has the LTP if, and only if, the norm on $\mathcal F(M)$ is octahedral.''
\begin{theorem}\label{t:circular}
For a metric space $M$ it is equivalent:
\begin{enumerate}
\item\label{hola1} The norm of $\Free(M)$ is octahedral.
\item\label{hola2} For each $\varepsilon>0$ and each finite subset $N \subset M$ there are points $u,v \in M$, $u \neq v$, such that every 1-Lipschitz function $f:N \to \Real$ admits an extension $\tilde{f}:M \to \Real$ which is $(1+\varepsilon)$-Lipschitz and satisfies $\tilde{f}(u)-\tilde{f}(v)\geq d(u,v)$. 
\item\label{hola3} For each finite subset $N\subseteq M$ and $\varepsilon>0$, there exist $u,v\in M, u\neq v$, such that
$$(1-\varepsilon)(d(x,y)+d(u,v))\leq d(x,u)+d(y,v)$$
holds for all $x,y \in N$.
\end{enumerate}
\end{theorem}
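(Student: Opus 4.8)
The plan is to prove the cycle $(\ref{hola1})\Rightarrow(\ref{hola3})\Rightarrow(\ref{hola2})\Rightarrow(\ref{hola1})$. Throughout I write $m_{xy}:=\frac{\delta_x-\delta_y}{d(x,y)}\in S_{\Free(M)}$ for the molecule determined by $x\neq y$, and I use freely that the molecules form a symmetric norming set, so that $\clco\{m_{xy}:x\neq y\}=B_{\Free(M)}$ by the bipolar theorem, and that $\Free(N)$ embeds isometrically in $\Free(M)$ for $N\subseteq M$. For $(\ref{hola1})\Rightarrow(\ref{hola3})$ I fix a finite $N\ni 0$ and $\varepsilon>0$, and apply octahedrality (in the form recalled in Section~\ref{s:preliminaries}) to the finite family $\{m_{xy}:x,y\in N,\ x\neq y\}$ with a small parameter $\varepsilon'$ to obtain $z\in S_{\Free(M)}$ with $\norm{m_{xy}+z}>2-\varepsilon'$ for all such $x,y$. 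The heart of this implication is to replace the abstract direction $z$ by a single molecule. I approximate $z$ by a finite convex combination $z\approx\sum_k b_k m_{u_kv_k}$ (with $\sum_k b_k=1$) up to $\gamma$, so that by convexity $\sum_k b_k\norm{m_{xy}+m_{u_kv_k}}\ge\norm{m_{xy}+z}-\gamma>2-\varepsilon'-\gamma$ for every ordered pair $(x,y)$. Since each summand is at most $2$, an averaging estimate shows that for each fixed pair the $b$-measure of the indices $k$ with $\norm{m_{xy}+m_{u_kv_k}}\le 2-\varepsilon$ is at most $(\varepsilon'+\gamma)/\varepsilon$; as $N$ is finite there are only finitely many pairs, so choosing $\varepsilon'$ and $\gamma$ small enough (depending on $\abs{N}$ and $\varepsilon$) the union of these bad index sets has $b$-measure $<1$. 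Hence some index $k^\ast$ is good for all pairs at once, and $(u,v):=(u_{k^\ast},v_{k^\ast})$ satisfies $\norm{m_{xy}+m_{uv}}>2-\varepsilon$ for all $x,y\in N$. Unpacking this, each such inequality produces a $1$-Lipschitz $g$ with $g(x)-g(y)>(1-\varepsilon)d(x,y)$ and $g(u)-g(v)>(1-\varepsilon)d(u,v)$, whence $d(x,v)+d(y,u)\ge (g(x)-g(v))+(g(u)-g(y))>(1-\varepsilon)(d(x,y)+d(u,v))$; applying this to the pair $(y,x)$ and using the symmetry of $d$ gives exactly (\ref{hola3}).

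For $(\ref{hola3})\Rightarrow(\ref{hola2})$ I fix $N$ and $\varepsilon$, apply (\ref{hola3}) with parameter $\eta=\varepsilon/(1+\varepsilon)$ to obtain $u\neq v$, and set $L:=1+\varepsilon$. Given a $1$-Lipschitz $f:N\to\Real$, the McShane one-point formulas show that the admissible values of an $L$-Lipschitz extension at $u$ (resp.\ $v$) form the interval $[a_u,b_u]$ (resp.\ $[a_v,b_v]$) with $a_u=\max_{x\in N}(f(x)-Ld(x,u))$ and $b_u=\min_{x\in N}(f(x)+Ld(x,u))$, and similarly for $v$. A short computation using $f(x)-f(y)\ge -d(x,y)$ together with (\ref{hola3}) gives $b_u-a_v=\min_{x,y\in N}\bigl(f(x)-f(y)+Ld(x,u)+Ld(y,v)\bigr)\ge d(u,v)$, while $a_u-b_v\le 0$ always holds. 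Consequently $d(u,v)\in[a_u-b_v,\,b_u-a_v]$, so I may choose $\tilde f(v)\in[a_v,b_v]$ and set $\tilde f(u):=\tilde f(v)+d(u,v)\in[a_u,b_u]$; the resulting function on $N\cup\{u,v\}$ is $L$-Lipschitz (the only new constraint, between $u$ and $v$, holds since $d(u,v)\le Ld(u,v)$) and satisfies $\tilde f(u)-\tilde f(v)=d(u,v)$. A final application of the McShane extension theorem produces the desired $L$-Lipschitz extension to all of $M$, giving (\ref{hola2}).

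For $(\ref{hola2})\Rightarrow(\ref{hola1})$ I verify Godefroy's form of octahedrality on the subspace $Y=\Free(N)$ for finite $N\ni0$. Given $\varepsilon$, (\ref{hola2}) provides $u\neq v$, and I claim $m_{uv}$ is the required direction. For $w\in Y$ I choose a norming $1$-Lipschitz $g$ on $N$ with $\duality{g,w}=\norm{w}$; for $\lambda\ge 0$ I extend $g$ via (\ref{hola2}) to a $(1+\varepsilon)$-Lipschitz $\tilde g$ with $\tilde g(u)-\tilde g(v)\ge d(u,v)$ and test $w+\lambda m_{uv}$ against $f=\tilde g/(1+\varepsilon)\in B_{\Lip(M)}$, obtaining $\norm{w+\lambda m_{uv}}\ge\duality{f,w}+\lambda\duality{f,m_{uv}}\ge(\norm{w}+\lambda)/(1+\varepsilon)$ because $\duality{f,w}=\norm{w}/(1+\varepsilon)$ and $\duality{f,m_{uv}}\ge 1/(1+\varepsilon)$; for $\lambda<0$ I apply (\ref{hola2}) to $-g$ and test against the negative of the resulting normalised extension. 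Since $1/(1+\varepsilon)\ge 1-\varepsilon$, this is precisely $\norm{w+\lambda m_{uv}}\ge(1-\varepsilon)(\norm{w}+\abs{\lambda})$.

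The main obstacle is the single-molecule reduction in $(\ref{hola1})\Rightarrow(\ref{hola3})$: octahedrality furnishes only an abstract unit vector $z$, whereas (\ref{hola3}) speaks of one genuine pair of points, so the measure-theoretic pigeonhole over the finitely many pairs of $N$—together with the careful bookkeeping of the parameters $\varepsilon'$, $\gamma$ and $\varepsilon$—is what makes the two formulations meet. A secondary technical point is guaranteeing, in $(\ref{hola3})\Rightarrow(\ref{hola2})$, that the two-point prescription is genuinely $L$-Lipschitz; this is why I set the difference equal to $d(u,v)$ rather than pushing both values to their extremes, which in general are mutually inconsistent.
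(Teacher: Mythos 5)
Your proof is correct and runs through the same cycle of implications as the paper, with essentially the paper's arguments for (\ref{hola3})$\Rightarrow$(\ref{hola2}) (a McShane two-point extension; the paper pushes $\tilde f(u)$ to the lower extreme of its admissible interval and then chooses $\tilde f(v)$, while you pin $\tilde f(u)-\tilde f(v)=d(u,v)$ inside the intersection of intervals -- same computation either way) and for (\ref{hola2})$\Rightarrow$(\ref{hola1}) (testing $\mu+\lambda\frac{\delta_u-\delta_v}{d(u,v)}$ against the normalised extension of a norming function; your restriction to $Y=\Free(N)$ suffices, as in the paper, because finitely supported measures are dense and the $n$-point reformulation of octahedrality recalled in Section~\ref{s:preliminaries} need only be checked on a dense subset). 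The genuine divergence is in (\ref{hola1})$\Rightarrow$(\ref{hola3}), at the point where the abstract octahedral direction must be replaced by a single molecule. The paper isolates this as Proposition~\ref{lemanormante}, valid for an arbitrary norming set $V\subseteq S_X$, and proves it on the dual side via the equivalence of octahedrality with the $w^*$-SD2P of $X^*$ from \cite{blrocta}: a norming element nearly attains the norm of the difference of two points of a convex combination of $w^*$-slices, and this forces it to nearly norm each slice-defining functional. You stay on the predual side: since the molecules form a symmetric set whose closed convex hull is $B_{\Free(M)}$, you approximate the octahedral direction by a convex combination of molecules and run an averaging/pigeonhole estimate over the finitely many ordered pairs of $N$ to extract one molecule good for all of them simultaneously; your bound $\sum_{k\in B_{xy}}b_k<(\varepsilon'+\gamma)/\varepsilon$ and the choice $(\varepsilon'+\gamma)\abs{N}(\abs{N}-1)<\varepsilon$ are correct. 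Both routes are sound; yours is more self-contained (no $w^*$-SD2P duality needed), while the paper's lemma is stated as a reusable general fact about norming sets -- though by the bipolar theorem the two hypotheses coincide after symmetrising $V$, so neither is strictly more general.
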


To prove this theorem we will need the following result, which brings to light the importance of norming subsets of Banach spaces with an octahedral norm.

\begin{proposition}\label{lemanormante}
Let $X$ be a Banach space with an octahedral norm and consider a norming subset $V\subseteq S_X$ for $X^*$. Then, given $x_1,\ldots, x_n\in S_X$ and $\varepsilon>0$ there exists $v\in V$ such that
$$\Vert x_i+ v\Vert>2-\varepsilon$$
holds for every $i\in\{1,\ldots, n\}$.
\end{proposition}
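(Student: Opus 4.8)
The plan is to feed octahedrality its natural output — a \emph{vector} $y\in S_X$ that is almost aligned, against every $x_i$, with a norm-one functional — and then to trade $y$ for a genuine element of $V$ using the norming hypothesis. The one structural fact I would isolate first is that $V$ being norming for $X^*$ is equivalent to $\clco(V)=B_X$: for each $x^*\in X^*$ the support function of $\clco(V)$ at $x^*$ is $\sup_{v\in V}x^*(v)=\Vert x^*\Vert$, which is also the support function of $B_X$, so these two norm-closed convex sets coincide by Hahn--Banach. In particular every point of $S_X$ is a norm-limit of finite convex combinations of elements of $V$.

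Concretely, given $x_1,\dots,x_n\in S_X$ and $\varepsilon>0$, I would fix auxiliary constants $\varepsilon',\delta>0$ with $\varepsilon'+\delta<\varepsilon/(2n)$ and apply octahedrality to obtain $y\in S_X$ with $\Vert x_i+y\Vert>2-\varepsilon'$ for all $i$. Choosing $f_i\in S_{X^*}$ with $f_i(x_i+y)=\Vert x_i+y\Vert$ forces both $f_i(x_i)>1-\varepsilon'$ and $f_i(y)>1-\varepsilon'$. Using $y\in B_X=\clco(V)$, I would write $\Vert y-\sum_{j=1}^m\lambda_j v_j\Vert<\delta$ for some $v_j\in V$ and weights $\lambda_j>0$ with $\sum_j\lambda_j=1$; then $\sum_j\lambda_j f_i(v_j)=f_i(\sum_j\lambda_j v_j)>1-\varepsilon'-\delta$ for every $i$.

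The main obstacle is precisely that this controls only a \emph{weighted average} of the numbers $f_i(v_j)$, whereas I need one index that is good for all $i$ at once — and a single element of $V$ need not approximate $y$ in norm. I would resolve this by a counting argument: with $\eta=\varepsilon/2$, the inequality above together with $f_i(v_j)\le 1$ forces the $\lambda$-weight of $\{j:f_i(v_j)\le 1-\eta\}$ to be at most $(\varepsilon'+\delta)/\eta$ for each $i$. Summing over the $n$ values of $i$, the total weight of the indices that fail for at least one $i$ is at most $n(\varepsilon'+\delta)/\eta<1$ by the choice of $\varepsilon',\delta$, so some index $j^*$ satisfies $f_i(v_{j^*})>1-\eta$ for every $i$. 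For this index
$$\Vert x_i+v_{j^*}\Vert\ge f_i(x_i)+f_i(v_{j^*})>(1-\varepsilon')+\left(1-\tfrac{\varepsilon}{2}\right)>2-\varepsilon,$$
so $v:=v_{j^*}\in V$ is the element we sought. What remains is only the bookkeeping of the constants, which the single inequality $\varepsilon'+\delta<\varepsilon/(2n)$ handles.
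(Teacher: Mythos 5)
Your proof is correct, but it follows a genuinely different route from the paper's. The paper never touches a vector $y$ produced by octahedrality: it passes to the dual, invokes the theorem that octahedrality of $X$ is equivalent to the $w^*$-strong diameter two property of $B_{X^*}$, takes two elements of the convex combination of $w^*$-slices $\frac1n\sum_i S(B_{X^*},x_i,\varepsilon)$ at distance almost $2$, and evaluates their difference at a single $v\in V$ chosen by the norming hypothesis; the bound $\frac1n\sum_i(f_i-g_i)(v)>2-\varepsilon/n$ then forces every term to exceed $2-\varepsilon$, which plays exactly the role of your pigeonhole step. You instead stay on the primal side: you use only the definition of octahedrality, the observation that a norming set satisfies $\overline{\mathrm{co}}(V)=B_X$ (correct, by comparing support functions), and a Markov-type averaging argument to extract one index $j^*$ good for all $i$ simultaneously. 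Your constants check out: $w_i<(\varepsilon'+\delta)/\eta$ for each $i$, the total bad weight is below $1$ when $\varepsilon'+\delta<\varepsilon/(2n)$ and $\eta=\varepsilon/2$, and the final estimate needs only $\varepsilon'<\varepsilon/2$. What each approach buys: yours is self-contained and does not rely on the cited duality result of Becerra Guerrero, L\'opez-P\'erez and Rueda Zoca, at the price of the bookkeeping; the paper's is shorter because the ``averaging forces every summand to be large'' trick is done once, in the dual, where the two points of the convex combination of slices already package the $n$ functionals for you.
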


\begin{proof}
The proof will stronly rely on the ideas of \cite[Theorem 2.1]{blrocta}. Pick $x_1,\ldots, x_n\in S_X$ and $\varepsilon>0$. Consider the convex combination of weak-star slices of $B_{X^*}$ defined by $$C:=\frac{1}{n}\sum_{i=1}^n S(B_{X^*},x_i,\varepsilon).$$
As $diam(C)=2$ \cite[Theorem 2.1]{blrocta} then there are $\frac{1}{n}\sum_{i=1}^n f_i,\frac{1}{n}\sum_{i=1}^n g_i\in C$ such that
$$\left\Vert \frac{1}{n}\sum_{i=1}^n (f_i-g_i)\right\Vert >2-\frac{\varepsilon}{n}.$$
Since $V$ is norming for $X^*$ we can find $v\in V$ such that
$$\frac{1}{n}\sum_{i=1}^n (f_i-g_i)(v)>2-\frac{\varepsilon}{n}.$$
It follows that $f_i(v)-g_i(v)>2-\varepsilon$ 
and consequently $f_i(v)>1-\varepsilon$ holds for every $i\in\{1,\ldots, n\}$.
With this and since $f_i \in S(\closedball{X^*},x_i,\varepsilon)$ we have
$$\Vert x_i+v\Vert\geq f_i(x_i)+f_i(v)>1-\varepsilon+
1-\varepsilon=2-2\varepsilon$$
for every $i\in\{1,\ldots, n\}$, and the result follows.
\end{proof}

\begin{proof}[Proof of Theorem \ref{t:circular}]

(\ref{hola2}) $\Rightarrow$ (\ref{hola1}): Pick finitely-supported measures $\mu_1,\ldots, \mu_n\in S_{\mathcal F(M)}$ and $\varepsilon>0$. 
Define $N:=\set{0} \cup \bigcup\limits_{i=1}^n supp(\mu_i)$, which is a finite subset of $M$.
For each $i\in\{1,\ldots, n\}$ we can find $g_i\in S_{Lip_0(N)}$ such that $g_i(\mu_i)=\Vert \mu_i\Vert$. 
By (ii) we can find $u,v\in M, u\neq v$ such that, for each $i\in\{1,\ldots, n\}$, there exists $f_i\in Lip_0(M)$ such that $f_i=g_i$ on $N$, $f_i(u)-f_i(v)\geq d(u,v)$ and $\Vert f_i\Vert\leq 1+\varepsilon$. 
Pick $i\in\{1,\ldots, n\}$. 
Now
$$
\left\Vert\mu_i+\frac{\delta_u-\delta_v}{d(u,v)} \right\Vert\geq \frac{f_i(\mu_i)+\frac{f_i(u)-f_i(v)}{d(u,v)}}{1+\varepsilon}>\frac{g_i(\mu_i)+1}{1+\varepsilon}=\frac{\Vert \mu_i\Vert+1}{1+\varepsilon}.
$$
Consequently, the norm of $\mathcal F(M)$ is octahedral, as desired.\\
(\ref{hola1}) $\Rightarrow$ (\ref{hola3}): Pick a finite subset $N\subseteq M$ and $\varepsilon>0$. Since $\mathcal F(M)$ has an octahedral norm we can find, using of Proposition \ref{lemanormante}, two elements $u\neq v\in M$ such that
\[
\left\Vert \frac{\delta_x-\delta_y}{d(x,y)}+ \frac{\delta_u-\delta_v}{d(u,v)} \right\Vert>2-\varepsilon,
\]
holds for every $x\neq y\in N$. Hence, given $x\neq y\in N$, there exists $f\in S_{Lip(M)}$ such that
$$\frac{f(x)-f(y)}{d(x,y)}+\frac{f(u)-f(v)}{d(u,v)}>2-\varepsilon.$$
This implies the following two conditions
$$\begin{array}{ccc}
\frac{f(x)-f(y)}{d(x,y)}>1-\varepsilon, & \mbox{and} & \frac{f(u)-f(v)}{d(u,v)}>1-\varepsilon.
\end{array}$$
Now, we have the following chain of inequalities:
$$1\geq \frac{f(x)-f(v)}{d(x,v)}=\frac{f(x)-f(y)+f(u)-f(v)+f(y)-f(u)}{d(x,v)}>$$
$$>\frac{(1-\varepsilon)d(x,y)+
(1-\varepsilon)d(u,v)-d(u,y)}{d(x,v)}.$$
Consequently
$$(1-\varepsilon)(d(x,y)+d(u,v))<d(x,v)+d(u,y).$$
Since $x\neq y\in N$ were arbitrary we conclude (\ref{hola3}).\\
(\ref{hola3}) $\Rightarrow$ (\ref{hola2}): Let $N \subset M$ finite and $\varepsilon>0$ be given. 
By the assumptions, there are $u,v \in M$, $u\neq v$, such that 
\[
\frac{1}{1+\varepsilon}(d(x,y)+d(u,v))\leq d(x,u)+d(y,v)
\]
for all $x,y \in N$.
Given a 1-Lipschitz function $f$ on $N$ we define $\displaystyle\tilde{f}(u)=\inf_{x\in N} f(x)+(1+\varepsilon)d(x,u)$, $\displaystyle\tilde{f}(v)=\sup_{x\in N\cup\set{u}} \displaystyle\tilde{f}(x)-(1+\varepsilon)d(x,v)$. 
Clearly $\tilde{f}$ is $(1+\varepsilon)$-Lipschitz on $N \cup \set{u,v}$ so it admits an $(1+\varepsilon)$-Lipschitz extension to the whole of $M$.
Since $N$ is finite, there exist $z\in N$ and $z' \in N \cup \set{u}$ such that $\tilde{f}(u)=f(z)+(1+\varepsilon)d(z,u)$ and $\tilde{f}(v)=\tilde{f}(z')-(1+\varepsilon)d(z',v)$.
If $z'=u$, we have $\tilde{f}(u)-\tilde{f}(v)=(1+\varepsilon)d(u,v)$.
If $z'\neq u$, we have 
\[
\begin{split}
\tilde{f}(u)-\tilde{f}(v)&=f(z)-f(z')+(1+\varepsilon)(d(z,u)+d(z',v))\\
&\geq f(z)-f(z')+\frac{1+\varepsilon}{1+\varepsilon}(d(z,z')+d(u,v))\geq d(u,v)
\end{split}
\]
which finishes the proof.
\end{proof}

\begin{remark}\label{remathick}
Let $M$ be a metric space and $0\leq r<1$. Note that, adapting the proof of Theorem \ref{t:circular}, it can be proved that each of the following assertion implies the next one:
\begin{enumerate}
\item \label{thick1} For every $\mu_1,\ldots, \mu_n\in S_{\mathcal F(M)}$ and every $\varepsilon>0$ there exists $u\neq v\in M$ such that
$$\left\Vert \mu_i+\frac{\delta_u-\delta_v}{d(u,v)}\right\Vert\geq 2-r-\varepsilon$$
holds for every $i\in\{1,\ldots, n\}$.
\item \label{thick2} For each finite subset $N\subseteq M$ and $\varepsilon>0$, there exist $u,v\in M, u\neq v$, such that
$$(1-r-\varepsilon)(d(x,y)+d(u,v))\leq d(x,u)+d(y,v)$$
holds for all $x,y \in N$.
\item \label{thick3} For each $\varepsilon>0$ and each finite subset $N \subset M$ there are points $u,v \in M$, $u \neq v$, such that every 1-Lipschitz function $f:N \to \Real$ admits an extension $\tilde{f}:M \to \Real$ which is $\frac{1}{1-r-\varepsilon}$-Lipschitz and satisfies $\tilde{f}(u)-\tilde{f}(v)\geq d(u,v)$.
\item \label{thick4} For every $\mu_1,\ldots, \mu_n\in S_{\mathcal F(M)}$ and every $\varepsilon>0$ there exists $u\neq v\in M$ such that
$$\left\Vert \mu_i+\frac{\delta_u-\delta_v}{d(u,v)}\right\Vert\geq 2-2r-\varepsilon$$
holds for every $i\in\{1,\ldots, n\}$.
\end{enumerate}
We do not know whether (\ref{thick3}) actually implies (\ref{thick1}). Moreover, notice that Theorem \ref{t:circular} is the particular case of the above implications whenever $r=0$. Finally, notice that assertion (\ref{thick1}) is equivalent to the fact that the \textit{Whitley's thickness index of $\mathcal F(M)$} is greater than or equal to $2-r$ (we refer to \cite{cps} and references therein for formal definitions and background on such index).

\end{remark}

It is time to give some examples of metric spaces enjoying the LTP. 
Notice that Proposition~\ref{propauc} provides a sweeping generalization of the point (\ref{eje3}) below but we postpone it till Section~\ref{sectidifefrechet} as its proof requires some preparatory work.

\begin{example}\label{ejepropiP}
Any of the following properties implies that a metric space $M$ has the LTP.
\begin{enumerate}
\item\label{eje1} $M$ is unbounded.
\item\label{eje2} $\inf\limits_{x\neq y}d(x,y)=0$.
\item\label{eje3} $M$ is an infinite subset of an $\mathbb R$-tree (for the formal definition and basic properties of an $\Real$-tree see~\cite{BH,dkp}).
\end{enumerate}
\end{example}

\begin{proof}
Pick a finite subset $N\subseteq M$ and $\varepsilon>0$. 
In order to prove (\ref{eje1}), consider $v=0$. 
Then, if $d(0,u)$ is large enough,  we have for every $x,y \in N$ that
$$\frac{d(x,y)+d(u,0)}{d(x,u)+d(y,0)}\leq \frac{1+\frac{d(x,y)+d(x,0)}{d(x,u)}}{1+\frac{d(y,0)}{d(x,u)}} <\frac{1}{1-\varepsilon}.$$

In order to prove (\ref{eje2}), let $\theta=\inf_{x\neq y \in N}d(x,y)$ and find $u,v \in M$, $u\neq v$, such that $d(u,v)<\frac{\varepsilon \theta}{2}$. Then, for every $x,y \in N$, we have
\[\begin{split}
d(x,y)+d(u,v)&\leq d(x,u)+d(y,v)+2d(u,v)\\
& \leq d(x,u)+d(y,v)+\varepsilon(d(x,y)+d(u,v)).
\end{split}
\]
This proves (\ref{eje2}).

Finally, in order to prove (\ref{eje3}), consider $M$ to be a subset of an $\mathbb R$-tree $T$. 
We can assume by the preceding cases that $M$ is bounded and uniformly discrete. 
Note that $\conv(N)=\bigcup_{x\neq y\in N} [x,y]$ is compact since $N$ is finite.
Let $\pi:T \to \conv(M)$ be the metric projection onto $conv(N)$.
Let $a=\inf\{d(x,y):x\neq y \in M\}$. 
By compactness, there exists $A \subset \conv(N)$ such that $\diam(A)<\varepsilon a$ 
and that $A':=\{x \in M\setminus \conv(N): \pi(x) \in A\}$ is infinite. 
Pick $u,v\in A'$ such that $u\neq v$. 
Then
\[
\begin{split}
d(x,y)+d(u,v)&\leq d(u,\pi(u))+d(v,\pi(v))+d(\pi(u),\pi(v))+d(x,y)\\
&\leq d(u,\pi(u))+d(x,\pi(u))+d(v,\pi(v))+d(y,\pi(v))\\
&+2d(\pi(u),\pi(v))= d(u,x)+d(y,v)+2d(\pi(u),\pi(v))\\
&\leq d(x,u)+d(y,v)+\varepsilon(d(x,y)+d(u,v))
\end{split}
\]
for all $x,y \in N$, so we are done.
\end{proof}

\begin{remark} Note that, using Theorem \ref{t:circular} and \cite[Theorem 2.4]{blrlip}, it can be proved that unbounded and non-uniformly discrete metric spaces have the LTP. However, the previous example provides a quite shorter proof of this fact, and thus of \cite[Theorem 2.4]{blrlip} combining the previous example with Theorem \ref{t:circular}.
\end{remark}

Let us now exhibit an example of an infinite metric space $M$ failing the LTP.
A more detailed study of such examples will be conducted in Section~\ref{sectidifefrechet}.
\begin{example}\label{ejenega}
Let $M:=\{0\}\cup\{x_n:n\in\mathbb N\}\cup\{z\}$ where $d(a,b)=1$ for every $a,b\in \{0\}\cup\{x_n:n\in\mathbb N\}$, $d(x_n,z)=1$ holds for all $n\in\mathbb N$ and $d(0,z)=2$. Now consider $N:=\{0,z\}$ and  $0<\varepsilon<\frac{1}{3}$. Then for every $u,v\in M$ with $u\neq v$ it is clear that $d(0,z)+d(u,v)\geq 3$ but case by case check shows that $\min\{d(0,u)+d(z,v),d(0,v)+d(z,u)\}\leq 2$.
Thus  $M$ fails the LTP.
\end{example}

We will apply Theorem \ref{t:circular} to prove two stability results for the LTP. But first, we have to state a preliminary result concerning the octahedrality in $\ell_1$-sums of Banach spaces. Although it is probably well known to specialists, we have not found any accurate reference for one of the implications, thus we include a proof for the sake of completeness.
\begin{proposition}\label{octa1sumanece}
Let $X$ and $Y$ be Banach spaces. Then the norm on $X\oplus_1 Y$ is octahedral if and only if the norm on $X$ or the norm on $Y$ is octahedral.
\end{proposition}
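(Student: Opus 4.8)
The plan is to work throughout with the reformulated definition of octahedrality recalled in Section~\ref{s:preliminaries}: a norm is octahedral precisely when, for every finite family of unit vectors $z_1,\dots,z_n$ and every $\varepsilon>0$, some unit vector $w$ satisfies $\norm{z_i+w}>2-\varepsilon$ for all $i$. I identify $X$ and $Y$ with the canonical isometric copies $X\times\set{0}$ and $\set{0}\times Y$ inside $X\oplus_1 Y$, and write a generic point as $(x,y)$ with $\norm{(x,y)}=\norm{x}+\norm{y}$.

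For the sufficiency direction, assume say that $X$ is octahedral. Given unit vectors $(x_1,y_1),\dots,(x_n,y_n)$ and $\varepsilon>0$, I normalise the non-zero first components $\hat{x_i}=x_i/\norm{x_i}$ and apply octahedrality of $X$ to produce $u\in S_X$ with $\norm{\hat{x_i}+u}>2-\varepsilon$ for every $i$ with $x_i\neq 0$. The witness in the sum will be $(u,0)\in S_{X\oplus_1 Y}$. To transfer the estimate I pick, for each such $i$, a norming functional $f_i\in S_{X^*}$ with $f_i(\hat{x_i})>1-\varepsilon$ and $f_i(u)>1-\varepsilon$; evaluating $f_i$ at $x_i+u=\norm{x_i}\hat{x_i}+u$ yields $\norm{x_i+u}>\norm{x_i}+1-2\varepsilon$, whence $\norm{(x_i,y_i)+(u,0)}=\norm{x_i+u}+\norm{y_i}>2-2\varepsilon$. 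The indices with $x_i=0$ are trivial, since then $\norm{y_i}=1$ and $\norm{(0,y_i)+(u,0)}=2$. After rescaling $\varepsilon$ this proves octahedrality of the sum; the case where $Y$ is octahedral is symmetric.

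The necessity direction is the one that genuinely requires work, so I argue by contraposition: assuming neither $X$ nor $Y$ is octahedral, I produce a finite family in $X\oplus_1 Y$ and a uniform gap witnessing failure of octahedrality there. Failure in $X$ gives unit vectors $x_1,\dots,x_m\in S_X$ and $\varepsilon_X>0$ such that every $u\in S_X$ satisfies $\norm{x_i+u}\leq 2-\varepsilon_X$ for some $i$, and symmetrically $y_1,\dots,y_k\in S_Y$ with a gap $\varepsilon_Y>0$. I claim the combined family $\set{(x_i,0)}_{i=1}^m\cup\set{(0,y_j)}_{j=1}^k$, together with $\varepsilon:=\tfrac14\min\set{\varepsilon_X,\varepsilon_Y}$, witnesses the failure. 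Fixing any $(u,w)\in S_{X\oplus_1 Y}$ we have $\norm{u}+\norm{w}=1$, so $\max\set{\norm{u},\norm{w}}\geq\tfrac12$; suppose $\norm{u}\geq\tfrac12$. Choosing $i$ with $\norm{x_i+u/\norm{u}}\leq 2-\varepsilon_X$ and using a norming functional, I show that if $\norm{x_i+u}$ were within $\eta$ of its maximal value $1+\norm{u}$, then $\norm{x_i+u/\norm{u}}$ would exceed $2-\eta(1+\norm{u}^{-1})\geq 2-3\eta$, contradicting the choice of $i$ once $3\eta<\varepsilon_X$. This forces $\norm{(x_i,0)+(u,w)}=\norm{x_i+u}+\norm{w}\leq 2-\eta$ with $\eta=\varepsilon_X/4$; the case $\norm{w}\geq\tfrac12$ is symmetric.

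The main obstacle is precisely this last estimate. The elements one may add in the sum are pairs $(u,w)$ whose first coordinate $u$ is not a unit vector, whereas failure of octahedrality in $X$ is a statement only about $S_X$ and cannot be applied to $u$ directly. The resolution is to normalise to $u/\norm{u}$ and to control the resulting distortion through the factor $1+\norm{u}^{-1}$, which is bounded by $3$ exactly on the range $\norm{u}\geq\tfrac12$ that the pigeonhole $\max\set{\norm{u},\norm{w}}\geq\tfrac12$ guarantees. This is what produces a gap that is uniform in $(u,w)$ and independent of which of the two coordinates dominates, completing the contrapositive.
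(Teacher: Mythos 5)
Your proof is correct, but the necessity direction takes a genuinely different route from the paper's. The paper dualises: it invokes the characterisation of octahedrality of $Z$ as the $w^*$-SD2P of $Z^*$ from \cite{blrocta}, builds inside $B_{X^*\oplus_\infty Y^*}$ a convex combination of relatively $w^*$-open sets as products of the small-diameter convex combinations of $w^*$-slices coming from $X^*$ and $Y^*$, estimates its diameter by the $\max$ formula for the $\ell_\infty$-sum, and then appeals to the lemma of Ghoussoub--Godefroy--Maurey--Schachermayer that every non-empty relatively $w^*$-open subset of the ball contains a convex combination of $w^*$-slices. You instead stay entirely on the primal side: you take the union of the two finite witnessing families, use the pigeonhole $\max\set{\norm{u},\norm{w}}\geq\tfrac12$ to decide which factor to test, and control the error caused by normalising $u$ to $u/\norm{u}$ via a norming functional, the factor $1+\norm{u}^{-1}\leq 3$ being exactly what the pigeonhole buys. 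Your estimates check out (in particular $f(x_i)\geq 1-\eta$ and $f(u)\geq\norm{u}-\eta$ follow correctly from $f(x_i+u)\geq 1+\norm{u}-\eta$, giving $\norm{x_i+u/\norm{u}}\geq 2-\eta(1+\norm{u}^{-1})\geq 2-3\eta$, which contradicts the choice of $i$ once $3\eta<\varepsilon_X$). Your argument is more elementary and self-contained --- it avoids the duality with the $w^*$-SD2P and the GGMS lemma altogether, and you also prove the sufficiency direction directly rather than citing \cite{hlp} as the paper does --- while the paper's dual approach has the advantage of fitting into the machinery (convex combinations of $w^*$-slices, Proposition~\ref{lemanormante}) that it uses elsewhere.
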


\begin{proof}
The sufficiency is proved in \cite[Proposition 3.10]{hlp}.
Let us prove the necessity.
We will assume that the norms of $X$ and $Y$ both fail to be octahedral and we will prove that the norm of $Z:=X\oplus_1 Y$ is not octahedral. In order to do that we will prove that $Z^*=X^*\oplus_\infty Y^*$ fails the $w^*$-SD2P. By assumptions both $X^*$ and $Y^*$ fail the $w^*$-SD2P, hence there are two convex combinations of weak-star slices of the following form
$$\begin{array}{cc}
C_1:=\frac{1}{m}\sum_{i=1}^m S(B_{X^*},x_i,\alpha), & C_2:=\frac{1}{n}\sum_{i=1}^n S(B_{Y^*},y_i,\alpha)
\end{array}$$
such that $\diam(C_1)<2$ and $\diam(C_2)<2$. Assume, with no loss of generality, that $n\geq m$, and define
$$C:=\frac{1}{n}\left(\sum_{i=1}^m S(B_{X^*},x_i,\alpha)\times S(B_{Y^*},y_i,\alpha)+\sum_{i=m+1}^n B_{X^*}\times S(B_{Y^*},y_i,\alpha)  \right).$$
Notice that $C$ is a convex combination of non-empty relatively weakly-star open subsets of $B_{Z^*}$. Since each non-empty relatively weakly-star open subset of $B_{Z^*}$ contains a convex combination of weak-star slices of $B_{Z^*}$ (see the proof of \cite[Lemma II.1]{ggms}), it is enough to prove that $\diam(C)<2$. To this aim pick $\frac{1}{n}\sum_{i=1}^n (x_i,y_i), \frac{1}{n}\sum_{i=1}^n (x_i',y_i')\in C$. Now
$$\left\Vert \frac{1}{n}\sum_{i=1}^n((x_i,y_i)-(x_i',y_i'))\right\Vert=\max\left\{ \left\Vert \frac{1}{n}\sum_{i=1}^n(x_i-x_i')\right\Vert, \left\Vert \frac{1}{n}\sum_{i=1}^n(y_i-y_i')\right\Vert\right\}.$$
Let us prove that both members of the above maximum are strictly smaller than $2$. On the one hand, notice that $\frac{1}{n}\sum_{i=1}^n y_i, \frac{1}{n}\sum_{i=1}^n y_i'\in C_2$, hence $\left\Vert \frac{1}{n}\sum_{i=1}^n(y_i-y_i')\right\Vert\leq \diam(C_2)<2$. On the other hand
$$\left\Vert \frac{1}{n}\sum_{i=1}^n (x_i-x_i')\right\Vert\leq \frac{1}{n}\left(\left\Vert \sum_{i=1}^m(x_i-x_i')\right\Vert+\sum_{i=m+1}^n\Vert x_i-x_i'\Vert\right).$$
Again, since $\frac{1}{m}\sum_{i=1}^m x_i, \frac{1}{m}\sum_{i=1}^m x_i'\in C_1$ we get that $\left\Vert \sum_{i=1}^m(x_i-x_i')\right\Vert\leq m \diam(C_1)$. So
$$\left\Vert \frac{1}{n}\sum_{i=1}^n (x_i-x_i')\right\Vert\leq \frac{1}{n}\left(m \diam(C_1)+(n-m)2\right)<2$$
which finishes the proof.\end{proof}

Now we will exhibit the announced stability result for the LTP.

\begin{proposition}\label{resulestaP}
Let $M$ be a metric space. Then:
\begin{enumerate}
\item \label{estapl1suma} Assume that $M$ is the $\ell_1$ sum of its two subsets, say $T_1,T_2$, i.e. $M=T_1\cup T_2$, $T_1\cap T_2=\{0\}$ and 
$$d(x,y)=d(x,0)+d(0,y)$$
for every $x\in T_1$ and every $y\in T_2$. Then, $M$ has the LTP if, and only if, $T_1$ or $T_2$ has the LTP.
\item \label{pasosubespa} If $M$ has the LTP and $N_1$ is a subset of $M$ such that $M\setminus N_1$ is finite, then $N_1$ has the LTP.
\end{enumerate}
\end{proposition}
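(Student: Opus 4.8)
The plan is to reduce both statements, via Theorem \ref{t:circular}, to octahedrality of the associated free spaces, and then to exploit $\ell_1$-structure on the Banach-space side. For part (\ref{estapl1suma}) the key observation is that an $\ell_1$-sum of pointed metric spaces induces an $\ell_1$-decomposition of the free space, namely $\Free(M)=\Free(T_1)\oplus_1\Free(T_2)$. I would establish this by duality. A function $f:M\to\Real$ with $f(0)=0$ is Lipschitz if and only if both restrictions $f|_{T_1},f|_{T_2}$ are, and then $\norm{f}_{\Lip(M)}=\max\set{\norm{f|_{T_1}}_{\Lip(T_1)},\norm{f|_{T_2}}_{\Lip(T_2)}}$; the only point requiring care is the bound on the cross quotients, where for $x\in T_1,y\in T_2$ one uses $d(x,y)=d(x,0)+d(0,y)$ and $f(0)=0$ to get $\abs{f(x)-f(y)}\leq\abs{f(x)}+\abs{f(y)}\leq\max\set{\norm{f|_{T_1}},\norm{f|_{T_2}}}(d(x,0)+d(0,y))$. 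Thus the restriction map is an isometric isomorphism $\Lip(M)=\Lip(T_1)\oplus_\infty\Lip(T_2)$ (onto by McShane extension), and passing to preduals gives the claimed decomposition. The conclusion is then the chain $M$ has the LTP $\iff$ $\Free(M)$ is octahedral (Theorem \ref{t:circular}) $\iff$ $\Free(T_1)$ or $\Free(T_2)$ is octahedral (Proposition \ref{octa1sumanece}) $\iff$ $T_1$ or $T_2$ has the LTP (Theorem \ref{t:circular}).

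For part (\ref{pasosubespa}) I would instead argue directly with the metric formulation (\ref{hola3}), writing $F:=M\setminus N_1$ (finite) and reasoning by contradiction: assume $N_1$ fails the LTP, witnessed by a finite $N_0\subseteq N_1$ and $\varepsilon_0\in(0,1)$ for which no pair of $N_1$ satisfies the trapezoid inequality of (\ref{hola3}) for $N_0$ with $\varepsilon_0$. I then apply the LTP of $M$ iteratively: fixing $\delta_k\downarrow 0$, I build an increasing family of finite sets with $A_0:=N_0\cup F$, let $(u_k,v_k)$ be a pair furnished by the LTP of $M$ for $A_{k-1}$ and $\delta_k$, and set $A_k:=A_{k-1}\cup\set{u_k,v_k}$. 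Since $F\subseteq A_{k-1}$, the choice $x=u_k,y=v_k$ in the trapezoid inequality shows the pair cannot have both endpoints in $F$; and for large $k$ (so that $\delta_k\leq\varepsilon_0$) it cannot have both in $N_1$ either, by the choice of $(N_0,\varepsilon_0)$. Hence for large $k$ exactly one endpoint lies in $F$; after passing to a subsequence and relabelling (the trapezoid inequality being symmetric under swapping the two pairs of points), I may assume $u_k=w$ for a fixed $w\in F$ and $v_k\in N_1$.

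The heart of the argument is to manufacture, out of these mixed pairs, admissible pairs lying \emph{inside} $N_1$. Putting $x=w$ in the trapezoid inequality for $(w,v_k)$ yields $(1-\delta_k)(d(w,y)+d(w,v_k))\leq d(y,v_k)$ for all $y\in A_{k-1}$, which with the triangle inequality forces the near-betweenness $d(y,v_k)\approx d(w,y)+d(w,v_k)$, the defect being controlled by $\delta_k$. Feeding these estimates (at stage $k$ for $y\in N_0$ and for $y=v_i$, and the analogous one at stage $i$) into the trapezoid inequality for $(v_i,v_k)\in N_1\times N_1$, together with $d(v_i,v_k)\leq d(w,v_i)+d(w,v_k)$, one checks that $(v_i,v_k)$ satisfies the trapezoid inequality for $N_0$ with parameter $\varepsilon_0$ as soon as $\delta_i\leq\varepsilon_0$. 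Moreover the $v_k$ must assume infinitely many distinct values: were $v_k=v^\ast$ eventually constant, then once $v^\ast\in A_k$ the choice $x=w,y=v^\ast$ would give $2(1-\delta_k)d(w,v^\ast)\leq 0$, absurd. So $v_i\neq v_k$ for suitable large $i<k$, and $(v_i,v_k)$ contradicts the failure of the LTP for $N_1$.

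I expect the main obstacle to be exactly this last maneuver: the LTP of $M$ guarantees only \emph{some} good pair, with nothing preventing it from meeting the exceptional finite set $F$, so one must work to push a legitimate pair into $N_1$. The tempting shortcut of treating $\Free(N_1)$ as a finite-codimensional subspace of $\Free(M)$—equivalently $\Lip(N_1)$ as a quotient $\Lip(M)/\Free(N_1)^{\perp}$ by a finite-dimensional subspace—does not work, since a quotient map can only decrease diameters of convex combinations of $w^{*}$-slices, so the $w^{*}$-SD2P need not descend. The delicate technical point is therefore the bookkeeping of the betweenness defects along the iteration, ensuring that the errors close uniformly and that enough distinct points $v_k$ are produced to land the required pair in $N_1$.
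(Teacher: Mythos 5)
Your proof is correct. Part (\ref{estapl1suma}) is essentially the paper's argument: the same decomposition $\Free(M)=\Free(T_1)\oplus_1\Free(T_2)$ (which the paper merely asserts and you justify via $\Lip(M)=\Lip(T_1)\oplus_\infty\Lip(T_2)$), followed by the same chain through Theorem \ref{t:circular} and Proposition \ref{octa1sumanece}. Part (\ref{pasosubespa}), however, is genuinely different. The paper stays on the Banach-space side: after disposing of the unbounded/non-uniformly-discrete case, it uses the Lipschitz retractions onto $N_1$ and $N_2:=\set{0}\cup(M\setminus N_1)$ to write $\Free(M)=\Free(N_1)\oplus\Free(N_2)$ with $\dim\Free(N_2)<\infty$, and then invokes Abrahamsen's theorem that octahedrality passes to closed subspaces of finite codimension. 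Note that your dismissal of the ``finite-codimensional subspace shortcut'' is therefore only half right: the naive $w^*$-slice/quotient argument you criticise indeed does not work, but the descent of octahedrality to finite-codimensional subspaces is a known theorem and is exactly what the paper uses. Your alternative is a self-contained, purely metric iteration, and it closes cleanly: the pair $(u_k,v_k)$ cannot lie in $A_{k-1}$ (take $x=u_k$, $y=v_k$), cannot lie in $N_1$ once $\delta_k\leq\varepsilon_0$, so after pigeonholing on the finite set $F$ you get a fixed $w\in F$ with $d(y,v_k)\geq(1-\delta_k)(d(w,y)+d(w,v_k))$ for all $y\in A_{k-1}$; summing the stage-$i$ and stage-$k$ estimates and using the triangle inequality for $d(x,y)$ and $d(v_i,v_k)$ gives the trapezoid inequality for $(v_i,v_k)$ with parameter $\varepsilon_0$ in one step, and $v_k\notin A_{k-1}$ guarantees $v_i\neq v_k$ for free. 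What your route buys is independence from the external octahedrality result, no case distinction on boundedness, and no need for the retraction/direct-sum machinery; what the paper's route buys is brevity and the reuse of general Banach-space permanence properties.
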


\begin{proof}
(\ref{estapl1suma}) Notice that the assumptions imply that $\mathcal F(M)=\mathcal F(T_1)\oplus_1 \mathcal F(T_2)$. 
Now the result follows applying Theorem~\ref{t:circular} twice and Proposition~\ref{octa1sumanece} once in between.

(\ref{pasosubespa}) We assume without loss of generality that $0\in N_1$. Let us denote $N_2:=\set{0} \cup M\setminus N_1$.
Notice that if $M$ is either unbounded or non-uniformly discrete then so is $N_1$. So we will assume that $M$ is a bounded and uniformly discrete metric space. 
In this case the following retractions will be Lipschitz: 
\[
\begin{array}{ccc}
r_1(x)=
\begin{cases}
x&\mbox{ if }x\in N_1\\
0&\mbox{ if }x \in N_2
\end{cases}&\mbox{ and }
r_2(x)=
\begin{cases}
0&\mbox{ if }x\in N_1\\
x&\mbox{ if }x \in N_2
\end{cases}
\end{array}
\]
Clearly $r_1\circ r_2(x)=r_2\circ r_1(x)=0$ and so the unique linear extensions $\bar{r_i}:\Free(M) \to \Free(N_i)$ of $r_i$, $i=1,2$, are continuous linear projections such that $\ker r_1=\Free(N_2)$ and vice versa. It follows that $\Free(M)=\Free(N_1)\oplus \Free(N_2)$.
The norm on $\mathcal F(M)$ is octahedral by the hypothesis and Theorem \ref{t:circular}.
Since $\dim\Free(N_2)<\infty$, \cite[Theorem 3.9]{abrahamsen} implies that $\Free(N_1)$ is octahedral.
Now another application of Theorem \ref{t:circular} shows that $N_1$ has the LTP.
\end{proof}

\begin{remark}
The assumption in Proposition \ref{resulestaP} (\ref{pasosubespa}) of  $M\setminus N_1$ being finite can not be removed.
This can be seen easily by taking the $\ell_1$ sum of two infinite metric spaces, one enjoying and the other one failing the LTP, and applying Proposition~\ref{resulestaP}~(\ref{estapl1suma}).
\end{remark}

\section{Metric spaces failing the property LTP}\label{sectidifefrechet}

\bigskip

When a metric space $M$ fails the LTP, one might wonder whether this can be checked on a subset $N$ consisting of mere 2 points. The next example provides a negative answer.

\begin{example}
Consider $M:=\{\alpha,\beta,0,z\}\cup\{x_n:n\in\mathbb N\}$ whose distance is defined in the following way:
$$d(0,x_n)=d(x_n,z)=1, d(0,z)=2, d(\alpha,0)=d(\beta,0)=1, d(\alpha,\beta)=2,$$
$$ d(\alpha, x_n)=d(\beta,x_n)=2, d(\alpha,z)=d(\beta,z)=3\mbox{ and } d(x_n,x_m)=1.$$
Denote by $T:=\{0,z\}\cup\{x_n:n\in\mathbb N\}$. Then
$$\mathcal F(M)=\mathcal F(T)\oplus_1\mathcal F(\{0,\alpha\})\oplus_1\mathcal F(\{0,\beta\}),$$
so $\mathcal{F}(M)$ fails to have an octahedral norm and, consequently, $M$ fails the LTP. We will prove, however, that the condition of LTP holds for every subset of $M$ of cardinality $2$. To this aim, pick $a,b\in M$. Then we have three possibilities for $a$ and $b$:
\begin{enumerate}
\item $d(a,b)=3$. Then, up to re-labeling $a$ and $b$, $b=z$ and $a$ is either $\alpha$ or $\beta$. We will assume, with no loss of generality, that $a=\alpha$. Then, the choice $u=\beta$ and $v=0$ does the work.
\item $d(a,b)=2$. In this case, we still have two more possibilities:
\begin{enumerate}
\item $a=0$ and $b=z$. In this case it is enough to choose $u=\alpha, v=\beta$.
\item $b=x_n$ for certain $n\in\mathbb N$ and $a$ is either $\alpha$ or $\beta$. We assume, with no loss of generality, that $a=\alpha$. In this case $u=\beta$ and $v=0$ yields the desired condition.
\end{enumerate}
\item $d(a,b)=1$. In this case, choose $u\neq v$ points such that $d(u,v)=1$ and $u,v$ being different from $a$ and $b$, and the inequality trivially holds.
\end{enumerate}
\end{example}

In spite of the previous example, the failure of the LTP can be checked on subsets of two points when we restrict our attention to a suitable metric subspace. More precisely, we get the following result.

 \begin{proposition}\label{propiramsey}
Let $M$ be a metric space failing the LTP. Then there exists an infinite subspace $A \subset M$ such that, for some $\varepsilon>0$ and some $x,y \in A$, we have
\[
(1-\varepsilon)(d(x,y)+d(u,v))>\min\set{d(x,u)+d(y,v),d(x,v)+d(y,u)}
\]
for all $u,v \in A$.
\end{proposition}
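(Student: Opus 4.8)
The plan is to reformulate the failure of the LTP in a form that is \emph{universal over all pairs}, stabilise the witnessing data by an infinite Ramsey argument, and finally insert the witnesses into the infinite set while controlling the ``boundary'' pairs by means of the witness inequality itself. Since we are looking for an infinite subset $A$, we assume $M$ is infinite. Negating condition (\ref{hola3}) of Theorem \ref{t:circular}, the failure of the LTP supplies a finite set $N\subseteq M$ and an $\varepsilon_0>0$ such that for \emph{every} pair $u\neq v$ in $M$ there is an ordered pair $(x,y)\in N\times N$ (a \emph{witness} for $(u,v)$) with
\[
(1-\varepsilon_0)(d(x,y)+d(u,v))>d(x,u)+d(y,v).
\]
A one-line computation with the triangle inequality shows any witness satisfies $x\neq y$: if $x=y$, the inequality would read $(1-\varepsilon_0)d(u,v)>d(x,u)+d(x,v)\geq d(u,v)$, which is impossible.

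Next I would fix a countably infinite subset $\{m_n\}\subseteq M$ and colour each $2$-element set $\{m_i,m_j\}$, $i<j$, by a witness $(x,y)\in N\times N$ for the ordered pair $(m_i,m_j)$. As $N$ is finite this is a finite colouring, so the infinite Ramsey theorem produces an infinite $A_0\subseteq\{m_n\}$ and a single pair $(x^*,y^*)$, with $x^*\neq y^*$, such that, writing $\prec$ for the induced order on $A_0$, one has for all $u\prec v$ in $A_0$
\[
(1-\varepsilon_0)(d(x^*,y^*)+d(u,v))>d(x^*,u)+d(y^*,v).\tag{$\star$}
\]
In particular $\min\{d(x^*,u)+d(y^*,v),\,d(x^*,v)+d(y^*,u)\}<(1-\varepsilon_0)(d(x^*,y^*)+d(u,v))$ for all distinct $u,v\in A_0$, which is exactly the desired conclusion with $x=x^*$, $y=y^*$, $\varepsilon=\varepsilon_0$, \emph{restricted to pairs from $A_0$}.

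The crux, and the main obstacle, is that the statement demands the distinguished points $x,y$ to lie in $A$, so I must enlarge $A_0$ to $A:=(A_0\setminus\{u_0\})\cup\{x^*,y^*\}$, where $u_0:=\min A_0$, and verify the inequality for the pairs in which $u$ or $v$ equals $x^*$ or $y^*$. One cannot simply shrink $\varepsilon$ here, since a betweenness degeneracy such as $d(y^*,v)=d(x^*,y^*)+d(x^*,v)$ would break the inequality for every $\varepsilon>0$. The key point is that $(\star)$ itself rules out such degeneracies. Applying $(\star)$ to $(u_0,v)$ with the fixed reference $u_0$ and combining with $d(u_0,v)\leq d(x^*,u_0)+d(x^*,v)$ gives, after cancelling the $d(x^*,u_0)$ terms,
\[
d(y^*,v)<(1-\varepsilon_0)(d(x^*,y^*)+d(x^*,v))\qquad\text{for all }v\in A_0,\ v\succ u_0;
\]
symmetrically, applying $(\star)$ to $(u,v)$ with any $v\succ u$ and using $d(u,v)\leq d(y^*,u)+d(y^*,v)$ yields
\[
d(x^*,u)<(1-\varepsilon_0)(d(x^*,y^*)+d(y^*,u))\qquad\text{for all }u\in A_0.
\]

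Finally I would assemble the cases. For distinct $u,v\in A_0\setminus\{u_0\}$ the conclusion is $(\star)$ as noted. When $u=x^*$ and $v\in A_0\setminus\{u_0\}$, the minimum is at most $d(x^*,x^*)+d(y^*,v)=d(y^*,v)$, which the first boundary estimate bounds strictly below $(1-\varepsilon_0)(d(x^*,y^*)+d(x^*,v))$; when $u=y^*$, the minimum is at most $d(x^*,v)+d(y^*,y^*)=d(x^*,v)$, handled by the second estimate. The pair $\{x^*,y^*\}$ is immediate, since the minimum equals $0$ while the left-hand side is positive because $x^*\neq y^*$. Removing $u_0$ is precisely what makes the first estimate available for every relevant $v$, and I expect the only delicate point of the write-up to be this bookkeeping of which reference point is needed in each boundary case. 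Taking $x=x^*$, $y=y^*$ and $\varepsilon=\varepsilon_0$ then completes the proof.
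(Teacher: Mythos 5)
Your proof is correct, and it follows the same two--stage skeleton as the paper's: negate condition (3) of Theorem \ref{t:circular} to obtain a finite set $N$ and $\varepsilon_0>0$ such that every pair $u\neq v$ admits a witness $(x,y)\in N\times N$ (necessarily $x\neq y$), apply Ramsey's theorem to stabilise the witness to a single pair $(x^*,y^*)$ over an infinite monochromatic set, and then repair the boundary pairs so that $x^*,y^*$ may be adjoined. The only genuine divergence is in the repair step. The paper keeps the whole monochromatic set $A'$ minus the exceptional points $z$ for which the inequality fails with $\set{u,v}=\set{x_0,z}$ or $\set{y_0,z}$, and shows each exceptional set is at most a singleton; you instead discard the minimum $u_0$ of $A_0$ and extract from $(\star)$ (applied to pairs containing $u_0$, resp.\ to pairs $(u,v)$ with $v\succ u$) combined with the triangle inequality the uniform estimates $d(y^*,v)<(1-\varepsilon_0)(d(x^*,y^*)+d(x^*,v))$ and $d(x^*,u)<(1-\varepsilon_0)(d(x^*,y^*)+d(y^*,u))$, which settle all boundary pairs at once. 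The two computations are essentially the same inequality read in opposite directions; your version avoids the case analysis over exceptional points at the cost of carrying the auxiliary reference point $u_0$, and your ordered-pair Ramsey colouring versus the paper's unordered one is immaterial. Two minor points worth making explicit in a write-up: the case $\set{u,v}=\set{x^*,y^*}$ needs $1-\varepsilon_0>0$, which is automatic since for $\varepsilon_0\geq 1$ no witness inequality could hold; and the standing assumption that $M$ is infinite is needed (as it is, implicitly, in the paper's proof as well, since a finite metric space trivially fails the LTP yet has no infinite subset).
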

\begin{proof}
There is $\varepsilon>0$ and a finite $N\subset M$ such that, for every couple $u\neq v \in M\setminus N$, we have
\[
(1-\varepsilon)(d(x,y)+d(u,v))>\min\set{d(x,u)+d(y,v),d(x,v)+d(y,u)}
\]
for some couple $x\neq y \in N$. 
Since there are only finitely many couples $x \neq y \in N$ a direct application of Ramsey's theorem gives that there exist $x_0\neq y_0 \in N$ and an infinite $A' \subset M$ such that 
\begin{equation}\label{eq:failure}
(1-\varepsilon)(d(x_0,y_0)+d(u,v))>\min\set{d(x_0,u)+d(y_0,v),d(x_0,v)+d(y_0,u)}
\end{equation}
for every $u\neq v \in A'$.
If $\set{x_0,y_0} \subset A'$, the result is true for $A:=A'$.
If not, we denote 
\[
A(x):=\set{z \in A': \mbox{ \eqref{eq:failure} fails for } u=x,v=z}.
\]
Note that 
\[
A(x_0):=\set{z \in A': d(y_0,z)\geq (1-\varepsilon)(d(x_0,y_0)+d(x_0,z))},
\]
that $A(x_0)\cap \set{x_0,y_0}=\emptyset$, and that similar properties hold for $A(y_0)$.
We put 
\[
A:=\set{x_0,y_0}\cup A'\setminus (A(x_0) \cup A(y_0)) .
\]
We claim that $A(x_0)$, resp. $A(y_0)$, is a singleton at most. 
In order to get a contradiction assume that $z\neq w \in A(x_0)$.
We have, without loss of generality, the following inequality
\[
\begin{split}
(1-\varepsilon)(d(x_0,y_0)+d(z,w))&>d(x_0,z)+d(y_0,w)\\
&\geq
d(x_0,z)+(1-\varepsilon)(d(x_0,y_0)+d(x_0,w))\\
&=(1-\varepsilon)(d(x_0,y_0)+d(x_0,z)+d(x_0,w))+\varepsilon d(x_0,z)\\
&\geq (1-\varepsilon)(d(x_0,y_0)+d(z,w))+\varepsilon d(x_0,z),
\end{split}
\]
which is absurd.
Hence $\cardinality{A(x_0)}\leq 1$. 
An identical proof shows that $\cardinality{A(y_0)}\leq 1$.  
It follows that $A$ is infinite which we wanted to prove.
\end{proof}

A prominent class of non-octahedral norms are the norms that admit a point of Fr\'echet differentiability. 
The main results of the paper imply that for the norm on $\Free(M)$ this can happen only when $M$ is uniformly discrete and bounded.
It is not difficult to come up with an example. 
Indeed, one can easily see that the norm on $\Free(M)$ is Fr\'echet differentiable at $\frac{\delta_z}{d(z,0)}$ when $M$ and $z$ are as in Example~\ref{ejenega}.
We wish to take a closer look at this phenomenon.

\begin{theorem}\label{teodifefrechet}
Let $M$ be a uniformly discrete bounded metric space. Consider $x_1,\ldots, x_n, y\in M$ and $\lambda_1,\ldots, \lambda_n\in\mathbb R^+$ such that $\sum_{i=1}^n \lambda_i=1$.  Define $\varphi:=\sum_{i=1}^n\lambda_i \frac{\delta_{x_i}-\delta_y}{d(x_i,y)}$. The following are equivalent:
\begin{enumerate}
\item\label{frechet2} $\varphi$ is a Fr\'echet differentiability point of $\mathcal F(M)$.
\item\label{linea2} Given $z\in M$ there exists $i\in\{1,\ldots,n\}$ such that 
$$d(x_i,y)=d(x_i,z)+d(z,y).$$
\item\label{gateaux2} $\varphi$ is a G\^ateaux differentiability point of $\mathcal F(M)$.
\end{enumerate}
\end{theorem}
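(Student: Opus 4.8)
The plan is to reduce everything to a clean description of the norm‑attaining functionals at $\varphi$ and then run the cycle $(\ref{frechet2})\Rightarrow(\ref{gateaux2})\Rightarrow(\ref{linea2})\Rightarrow(\ref{frechet2})$. First I would record that $\norm{\varphi}=1$, witnessed by $f_0(t):=d(t,y)-d(0,y)$: since every $f\in B_{\Lip(M)}$ satisfies $\frac{f(x_i)-f(y)}{d(x_i,y)}\le 1$, we get $f(\varphi)\le\sum_i\lambda_i=1$, with equality exactly when $f(x_i)-f(y)=d(x_i,y)$ for all $i$, and $f_0$ realises this. Hence the supporting functionals of $\varphi$ are precisely the $f\in\Lip(M)$ with $\norm f\le 1$, $f(0)=0$ and $f(x_i)-f(y)=d(x_i,y)$ for every $i$. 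The implication $(\ref{frechet2})\Rightarrow(\ref{gateaux2})$ is then immediate since Fréchet differentiability implies Gâteaux differentiability; by Šmulyan's lemma (Section~\ref{s:preliminaries}) the point $\varphi$ is Gâteaux differentiable iff this set of supporting functionals is a singleton, and Fréchet differentiable iff $\inf_{\alpha>0}\diam S(B_{\Lip(M)},\varphi,\alpha)=0$.

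For $(\ref{gateaux2})\Rightarrow(\ref{linea2})$ I would argue by contraposition. If $(\ref{linea2})$ fails, there is $z_0\in M$ with $d(x_i,y)<d(x_i,z_0)+d(z_0,y)$ for every $i$ (forcing $z_0\notin\set{y,x_1,\dots,x_n}$), and I would exhibit a second supporting functional $f_1\neq f_0$. The candidate is the inf‑convolution $g_1(t):=\min\bigl(d(t,y),\,(d(z_0,y)-\eta)+d(t,z_0)\bigr)$ for small $\eta>0$, normalised to $f_1:=g_1-g_1(0)$. As a minimum of two $1$‑Lipschitz functions $g_1$ is $1$‑Lipschitz; choosing $0<\eta<\min_i\bigl(d(x_i,z_0)+d(z_0,y)-d(x_i,y)\bigr)$, which is positive precisely because $(\ref{linea2})$ fails, keeps $g_1(x_i)=d(x_i,y)$ and $g_1(y)=0$, so $f_1$ is a supporting functional. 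Finally $g_1(z_0)=d(z_0,y)-\eta<d(z_0,y)$, so $g_1-d(\cdot,y)$ is non‑constant (it is $0$ at $y$ and $-\eta$ at $z_0$), whence $f_1\neq f_0$, contradicting uniqueness.

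For $(\ref{linea2})\Rightarrow(\ref{frechet2})$ I would estimate slice diameters directly. Put $\lambda_{\min}=\min_i\lambda_i>0$, $D=\diam(M)<\infty$ and $\theta=\inf_{a\neq b}d(a,b)>0$ (finite and positive by hypothesis). Any $f\in S(B_{\Lip(M)},\varphi,\alpha)$ satisfies $\sum_i\lambda_i\frac{f(x_i)-f(y)}{d(x_i,y)}>1-\alpha$, so each term exceeds $1-\beta$ with $\beta:=\alpha/\lambda_{\min}$. Fixing $z\in M$ and choosing $i$ with $d(x_i,y)=d(x_i,z)+d(z,y)$ (this is where $(\ref{linea2})$ enters), the chain $(1-\beta)d(x_i,y)<f(x_i)-f(y)\le d(x_i,z)+d(z,y)=d(x_i,y)$ forces $\abs{(f(z)-f(y))-d(z,y)}<\beta\bigl(d(z,y)+d(x_i,z)\bigr)\le 2\beta D$. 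Hence for two slice members $f,h$ the difference $\psi:=f-h$ obeys $\abs{\psi(z)-\psi(y)}<4\beta D$ for every $z$, so $\abs{\psi(a)-\psi(b)}<8\beta D$, and dividing by $d(a,b)\ge\theta$ gives $\norm{\psi}\le 8D\alpha/(\theta\lambda_{\min})$. Letting $\alpha\to0$ yields $\inf_{\alpha>0}\diam S(B_{\Lip(M)},\varphi,\alpha)=0$, i.e.\ Fréchet differentiability.

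The delicate step is the construction in $(\ref{gateaux2})\Rightarrow(\ref{linea2})$: a naive pointwise lowering of $f_0$ at $z_0$ alone can break the Lipschitz condition against points $w$ with $z_0$ on a metric segment $[w,y]$, whereas the inf‑convolution propagates the perturbation with slope one and repairs this automatically, the gap coming from the failure of $(\ref{linea2})$ being exactly what preserves the constraints at the $x_i$. The second essential use of the standing hypotheses is in $(\ref{linea2})\Rightarrow(\ref{frechet2})$, where boundedness ($D<\infty$) and uniform discreteness ($\theta>0$) are what convert an $L^\infty$‑type closeness of $f$ to $f_0$ into genuine Lipschitz‑norm closeness; without them the diameters of the slices need not shrink.
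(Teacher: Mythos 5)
Your proof is correct and follows essentially the same route as the paper: the implication (\ref{linea2})$\Rightarrow$(\ref{frechet2}) via \v Smulyan's lemma, using boundedness and uniform discreteness to pass from uniform closeness to $d(\cdot,y)$ to Lipschitz-norm closeness (your explicit constant $8D/(\theta\lambda_{\min})$ is just the paper's equivalence constant $C$ made concrete), and the contrapositive of (\ref{gateaux2})$\Rightarrow$(\ref{linea2}) by exhibiting two distinct supporting functionals. The only cosmetic difference is that your second functional is given by a globally defined inf-convolution, whereas the paper defines it on the finite set $\set{0,x_1,\dots,x_n,z}$ and invokes a norm-preserving extension; both are valid.
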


\begin{proof}We will assume with no loss of generality that $y=0$.

(\ref{linea2})$\Rightarrow$(\ref{frechet2}). Pick $\varepsilon>0$ and $f\in B_{\Lip(M)}$ such that $\varphi(f)=\sum_{i=1}^n\lambda_i \frac{f(x_i)}{d(x_i,0)} >1-\frac{\varepsilon}{\min\limits_{1\leq i \leq n}\lambda_i}$. An easy convexity argument yields that $f(x_i)>(1-\varepsilon)d(x_i,0)$ for each $i\in \{1,\ldots, n\}$. Pick an element $z\in M$. By assumptions there exists $i\in\{1,\ldots, n\}$ such that $d(x_i,0)=d(x_i,z)+d(z,0)$. 
Now
\[
\begin{split}
d(z,0)\geq f(z)&\geq f(x_i)-\vert f(z)-f(x_i)\vert>(1-\varepsilon)d(x_i,0)-d(x_i,z)\\
&=(1-\varepsilon)(d(x_i,z)+d(z,0))-d(x_i,z)=d(0,z)-\varepsilon d(x_i,0)
\end{split}
\]
We thus have $\vert f(z)-d(z,0)\vert<\varepsilon d(x_i,0)<\varepsilon \diam(M).$
Consequently, one has
$$\Vert f-d(\cdot, 0)\Vert\leq C\norm{f-d(\cdot,0)}_\infty\leq \varepsilon C\diam(M)$$
where $C\geq 1$ is the constant of equivalence between the Lipschitz and the uniform norm on $\Lip(M)$ (we recall that $\Lip(M)$ is isomorphic to $\ell_\infty(M\setminus\set{0})$ as $M$ is uniformly discrete and bounded).
According to \v Smulyan lemma, $\varphi$ is a point of Fr\'echet differentiability (with $d(\cdot,0)\in \Free(M)^*$ being the differential).

(\ref{frechet2})$\Rightarrow$(\ref{gateaux2}) is obvious.

(\ref{gateaux2})$\Rightarrow$(\ref{linea2}). Assume that for some $z \in M$, (\ref{linea2}) does not hold for any $x_j$ and let us prove that (\ref{gateaux2}) does not hold either. To see that define $f_i:\{0,x_1,\ldots, x_n, z\}\to \mathbb R$ for $i=1,2$ as follows: $f_i(0)=0$, $f_i(x_j)=d(0,x_j)$ for every $j\in\{1,\ldots, n\}$, $f_1(z)=d(0,z)$ and $f_2(z)=\max\{-d(0,z),\max\limits_{1\leq i\leq n}d(x_i,0)-d(z,x_i)\}$. 
We have clearly $\|f_i\|=1$, $\langle f_i,\varphi\rangle=1$ and $f_1\neq f_2$. Indeed, by assumptions $f_2(z)<d(z,0)=f_1(z)$. Now the respective norm-one extensions $\tilde{f}_i$ of $f_i$, $i=1,2$ show that $\varphi$ is not a point of G\^ateaux differentiability. \end{proof}

Let $X$ be a Banach space and let $\norm{\cdot}$ be an equivalent non-octahedral norm on $X$. 
It is easily seen that there exists $\varepsilon>0$ such that every norm $\abs{\cdot}$ which satisfies
\[
\frac{1}{1+\varepsilon}\norm{x}\leq \abs{x}\leq(1+\varepsilon)\norm{x}
\]
is non-octahedral.
Let now $(M,d)$ be a bounded uniformly discrete metric space which fails the LTP. 
Then it follows from the above and from Theorem~\ref{t:circular} that there exists $\varepsilon>0$ such that every metric $d'$ on $M$ which satisfies
\[
\frac{1}{1+\varepsilon}d(x,y)\leq d'(x,y) \leq (1+\varepsilon)d(x,y)
\]
fails the LTP too. 

We single out a particular example of this fact.
In what follows we will work with the metric graph
$M=\set{0,z}\cup \set{x_i:i\in \Natural}$
where the edges are the couples of the form $\set{0,x_i}$ or $\set{x_i,z}$ and the metric $d$ is the shortest path distance.
\begin{lemma}\label{l:distortion}
Let $d'$ be a metric on $M$ such that $(M,d)$ and $(M,d')$ are Lipschitz equivalent with distortion $D<2$. 
Then $(M,d')$ fails the LTP.
\end{lemma}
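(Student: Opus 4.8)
The plan is to fix $N:=\set{0,z}$ — the same two-point set that witnesses the failure of the LTP for $(M,d)$ in Example~\ref{ejenega} — and to show that it still witnesses the failure for $(M,d')$ as soon as the distortion is strictly below $2$. First I would normalise the situation: since the LTP is invariant both under scaling the metric (the defining inequality is homogeneous of degree one in $d$) and under isometry, i.e.\ relabelling of points, I may assume that the Lipschitz equivalence is realised by the identity map and, after rescaling $d'$, that $d(a,b)\le d'(a,b)\le D\,d(a,b)$ for all $a,b\in M$, with $D<2$.

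Writing $A:=d'(0,z)$, the structure of $(M,d)$ yields the bounds $A\in[2,2D]$, $d'(0,x_i),d'(z,x_i)\in[1,D]$, and $d'(x_i,x_j)\in[2,2D]$ for $i\neq j$. To show that $(M,d')$ fails the LTP with this $N$, I must exhibit a single $\varepsilon'>0$ such that for every pair $u\neq v\in M$ one has
$$(1-\varepsilon')\bigl(A+d'(u,v)\bigr)> m(u,v):=\min\set{d'(0,u)+d'(z,v),\,d'(z,u)+d'(0,v)},$$
because this forces one of the two orderings $(x,y)\in\set{(0,z),(z,0)}$ to violate the defining inequality of the LTP. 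I would prove the uniform estimate $m(u,v)\le \tfrac{D}{2}\bigl(A+d'(u,v)\bigr)$ by a short case split on the location of $u,v$: (i) both among the $x_i$, where $m\le 2D$ while $A+d'(u,v)\ge 4$; (ii) one of $u,v$ a pole $0$ or $z$ and the other an $x_i$, where the relevant cross-sum collapses to a single term $\le D$ while the denominator is $\ge 3$; and (iii) $\set{u,v}=\set{0,z}$, where $m=0$. The worst ratio occurs in case~(i) and equals $D/2<1$.

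With $c:=D/2<1$ in hand, I would finish by setting $\varepsilon':=(1-c)/2>0$, so that $1-\varepsilon'=(1+c)/2\in(c,1)$ and hence $(1-\varepsilon')\bigl(A+d'(u,v)\bigr)>c\bigl(A+d'(u,v)\bigr)\ge m(u,v)$ for every $u\neq v$ (using $A>0$). This is exactly the strict inequality required for the negation of the LTP, so $(M,d')$ fails the LTP.

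The step I expect to be delicate is isolating case~(i) as the binding one and observing that the ratio there equals exactly $D/2$; this is precisely what forces the hypothesis $D<2$, and at $D=2$ the estimate degenerates, so the bound is essentially sharp for this graph. Everything else — the reduction to the identity map and cases~(ii)--(iii) — should be routine bookkeeping. The one point to keep an eye on is that the estimate on $m(u,v)$ must be uniform over the infinitely many pairs $u\neq v$, which is exactly why the crude bounds $d'(x_i,x_j)\ge 2$ and $d'(0,x_i),d'(z,x_i)\le D$, rather than any finer information, are the right tools.
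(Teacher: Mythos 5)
Your proof is correct and follows essentially the same route as the paper's: fix $N=\set{0,z}$, normalise the equivalence by scaling, and check the same three cases of $(u,v)$ to see that the minimum of the two cross-sums is uniformly bounded by $\tfrac{D}{2}\bigl(d'(0,z)+d'(u,v)\bigr)$, which is exactly why $D<2$ suffices. The only differences are cosmetic (you scale so that $d\le d'\le Dd$ rather than $d/D\le d'\le d$, and you extract the constant $D/2$ explicitly before choosing $\varepsilon$).
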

Notice that the countable equilateral space is $2$-Lipschitz equivalent to $(M,d)$ so the above lemma is optimal.
\begin{proof}
By the hypothesis there are $D<2$ and $s>0$ such that 
\[
\frac{s}{D}d(x,y)\leq d'(x,y)\leq sd(x,y)
\]
for all $x,y\in M$.
Since the LTP is invariant under scaling of the metric, we may assume that $s=1$.
We are going to show that for $N=\set{0,z}$, $0<\varepsilon<1-\frac{D}2$ and all $u,v \in M$ we have
\[A:=(1-\varepsilon)(d'(0,z)+d'(u,v))>\min\set{d'(0,u)+d'(z,v),d'(0,v)+d'(z,u)}=:B.\]

When $(u,v)=(x_n,x_m)$ we have $A> 2\geq B$.
When $(u,v)=(0,x_n)$ we have $A> \frac{3}{2}>1\geq B$.
The same relation holds when $(u,v)=(z,x_n)$.
\end{proof}

\begin{proposition}\label{p:ellp}
For every $1<p<\infty$, the above space $(M,d)$ embeds into $\ell_p$ with distortion $D<2$. Consequently, $\ell_p$ contains a subset $A$ failing the LTP.
\end{proposition}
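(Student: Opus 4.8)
The plan is to produce an explicit bi-Lipschitz embedding $\phi\colon (M,d)\to \ell_p$ whose distortion is strictly less than $2$; once this is in hand the ``consequently'' clause is free. Indeed, setting $A:=\phi(M)$ and equipping it with the restriction of the $\ell_p$-metric, the space $(A,\norm{\cdot}_p)$ is Lipschitz equivalent to $(M,d)$ with distortion $D<2$, so Lemma~\ref{l:distortion} applies verbatim and shows that $A$ fails the LTP. Thus all the work lies in constructing $\phi$.

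First I would record the four types of distances in $(M,d)$: the two ``long'' ones $d(0,z)=2$ and $d(x_i,x_j)=2$ ($i\neq j$), and the ``short'' ones $d(0,x_i)=d(z,x_i)=1$. The space is symmetric under interchanging $0$ and $z$ and under permuting the $x_i$, which suggests a symmetric embedding. Writing $(e_k)_{k\geq 0}$ for the canonical basis of $\ell_p$, I would try
$$\phi(0):=-e_0,\qquad \phi(z):=e_0,\qquad \phi(x_i):=2^{1-1/p}\,e_i\quad(i\in\Natural),$$
so that $0$ and $z$ are separated along a single distinguished coordinate while each $x_i$ sits on its own private coordinate.

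The next step is a short computation of image distances. Using $(2^{1-1/p})^p=2^{p-1}$ one gets $\norm{\phi(0)-\phi(z)}_p=2$ and $\norm{\phi(x_i)-\phi(x_j)}_p=2^{1-1/p}\cdot 2^{1/p}=2$, so both long pairs land at distance exactly $2$, while $\norm{\phi(0)-\phi(x_i)}_p=\norm{\phi(z)-\phi(x_i)}_p=(1+2^{p-1})^{1/p}$. Consequently every ratio $\norm{\phi(a)-\phi(b)}_p/d(a,b)$ equals either $1$ (long pairs) or $\alpha:=(1+2^{p-1})^{1/p}$ (short pairs); since $\alpha\geq 1$, the distortion of $\phi$ is exactly $\alpha$, i.e. $d(a,b)\leq \norm{\phi(a)-\phi(b)}_p\leq \alpha\, d(a,b)$. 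Finally I would check the elementary inequality $\alpha<2$, which unwinds to $1+2^{p-1}<2^p=2\cdot 2^{p-1}$, that is $1<2^{p-1}$, valid exactly for $p>1$. Plugging $D=\alpha<2$ into Lemma~\ref{l:distortion} finishes the proof.

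The single non-routine ingredient is the choice of the scaling factor $2^{1-1/p}$ on the private coordinates. It is calibrated precisely so that the two \emph{a priori} different kinds of distance-$2$ pairs are mapped to the \emph{same} image distance; this is what makes all ratios collapse to the two values $1$ and $\alpha$, and thereby lets us read off the distortion as a single closed-form expression. I expect the distance computations to be entirely routine; the only point to watch is that $\alpha=(1+2^{p-1})^{1/p}\to 2$ as $p\to 1^{+}$, so the construction degenerates exactly at the endpoint $p=1$, in agreement with the fact that $\ell_1$ admits no subset failing the LTP and with the optimality remark following Lemma~\ref{l:distortion}.
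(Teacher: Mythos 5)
Your proposal is correct and coincides with the paper's proof: the same embedding $\phi(0)=-e_1$, $\phi(z)=e_1$, $\phi(x_i)=2^{(p-1)/p}e_i$ is used, with the same distortion $\sqrt[p]{1+2^{p-1}}<2$ and the same appeal to Lemma~\ref{l:distortion}. You have merely written out explicitly the "routine computation" that the paper leaves to the reader.
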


\begin{proof}
Let $1<p<\infty$ be fixed.
We define $\phi: M \to \ell_p$ as $\phi(0)=-e_1$, $\phi(z)=e_1$ and $\phi(x_i)=2^{\frac{p-1}{p}}e_i$, where $(e_i)$ is the canonical basis of $\ell_p$.
A routine computation shows that
the distortion of $\phi$ is $\sqrt[p]{1+2^{p-1}}$ which is strictly less than $2$ for $p>1$.
It follows from Lemma~\ref{l:distortion} that $(\phi(M),\norm{\cdot}_p)$ fails the LTP.
\end{proof}

\begin{remark}
Since the $\Real$-trees are exactly those metric spaces which are $CAT(\kappa)$ for every $\kappa \in \Real$ (see~\cite{BH} for this notion)  and since all the infinite subsets of an $\Real$-tree enjoy the LTP, one might be tempted to conjecture that the infinite subsets of $CAT(\kappa)$ spaces have the LTP (at least when $\kappa<0$). 

This turns out not to be the case.
We argue as follows. 
Notice that $\ell_2$ is a $CAT(0)$ space.
By Proposition~\ref{p:ellp}, it contains the set $\phi(M)$ failing the LTP.
By the discussion preceding Lemma~\ref{l:distortion} and using the scaling invariance of the LTP, there is $\varepsilon>0$ such that, if a metric $d'$ on $\phi(M)$ satisfies 
\[
\frac{s}{1+\varepsilon}\norm{x-y}_2\leq d'(x,y)\leq (1+\varepsilon)s\norm{x-y}_2
\]
for some $s>0$ and all $x,y \in \phi(M)$,
then $(\phi(M),d')$ fails the LTP.
We consider $\mathbb{H}^\infty=\set{x \in \ell_2:x_1^2-\sum_{i=2}^\infty x_i^2=1}$ with its hyperbolic distance $\rho$, which is a $CAT(-1)$ space. 
Moreover, it is a Hilbert manifold and so the metric restricted to small enough neighbourhoods of points is as close to the $\ell_2$ metric as one might wish.
In particular there exists an $s>0$ such that $s\closedball{\ell_2}$ embeds into $\mathbb{H}^\infty$ with distortion $1+\varepsilon$, say via a mapping $f$.
We define $d'(x,y):=\rho(f(\frac{s}2x),f(\frac{s}2y))$ for all $x,y \in \phi(M)$.
By the above discussion $(\phi(M),d')$ fails the LTP and is isometric to a subset of $\mathbb{H}^\infty$.
Now, since $\left(\mathbb{H}^\infty,\frac{\rho}{\sqrt{\abs{\kappa}}}\right)$ is a $CAT(\kappa)$ space for $\kappa<0$ we see, by the scaling invariance of the LPT, that the $CAT(\kappa)$ condition does not exclude the presence of the subsets without the LTP.
\end{remark}

The distortion of the embedding in Proposition~\ref{p:ellp} tends to $2$ when $p \to \infty$ or $p \to 1$.
In the case of $p \to \infty$, this is not of a fundamental importance. Indeed, one can easily embed isometrically $(M,d)$ into $c$, the space of convergent sequences.
Similarly, one can easily embed isometrically the space from the Example~\ref{ejenega} into $c_0$.
Thus both $c$ and $c_0$ contain subsets failing the LTP.

On the other hand the behaviour of the distortion when $p \to 1$ is a manifestation of a fundamental fact that we will present next.

We need to introduce the following concepts.
Given a Banach space $(X,\norm{\cdot})$ it is said that $X$ is \textit{asymptotically uniformly convex (AUC)} if, for every $t>0$, the following inequality holds
$$\delta_X(t):=\inf\limits_{x\in S_X}\sup\limits_{codim(Y)<\infty}\inf\limits_{y\in S_Y} \Vert x+ty\Vert-1>0.$$
The function $\delta_X$ is called the \textit{modulus of asymptotic uniform convexity} of $X$ and it has been introduced in~\cite{Milman} (see also \cite{JLPS} for some further properties of this modulus).
It is clear that $\delta_X(t)\leq t$ holds for every $t> 0$. Moreover, $X=\ell_1$ satisfies that $\delta_X(t)=t$ for all $t\in \mathbb R^+$.

\begin{proposition}\label{propauc}
Let $X$ be a AUC Banach space such that $\delta_X(t)=t$ holds for all $t\geq 0$. Then every infinite subset of $X$ has the LTP. 

In particular, for every infinite subset $M$ of $X$ it follows that $\mathcal F(M)$ has an octahedral norm.
\end{proposition}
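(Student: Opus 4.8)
The plan is to verify the LTP directly using characterisation (\ref{hola3}) of Theorem~\ref{t:circular}, so I must show that for every finite $N\subseteq M$ and every $\varepsilon>0$ there exist $u\neq v$ in $M$ with $(1-\varepsilon)(d(x,y)+d(u,v))\leq d(x,u)+d(y,v)$ for all $x,y\in N$. Since $M$ is infinite I can pick an infinite sequence $(u_k)$ of distinct points of $M$; the key idea is that the hypothesis $\delta_X(t)=t$ forces the differences $u_k-u_j$ to behave, for large indices, like an $\ell_1$-sum over the fixed finite set $N$ together with the new points. First I would recall what $\delta_X(t)=t$ means geometrically: for every $x\in S_X$ and every $\eta>0$ there is a finite-codimensional subspace $Y$ such that $\norm{x+ty}\geq 1+t-\eta$ for all $y\in S_Y$; by a standard weak-compactness/gliding-hump argument this transfers to sequences, namely that any normalised sequence tending weakly to $0$ satisfies $\norm{x+tz_k}\to \norm{x}+t$ uniformly in the relevant sense.

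The central step is to produce, from the infinite subset $M$, a single pair $u=u_k$, $v=u_j$ (with $k,j$ large) that works simultaneously for all $x,y\in N$. Here I would exploit that after translating so that one point is the origin, an infinite bounded subset of $X$ has a weakly convergent subsequence (if $X$ is reflexive) or at least a subsequence that is ``spread out'' enough; the hypothesis $\delta_X(t)=t$ is exactly the \emph{maximal} asymptotic uniform convexity, which is the abstract shadow of the $\ell_1$ behaviour. Concretely, I expect to choose the $u_k$ so that $u_k-u_j$ and each fixed difference $x-y$ with $x,y\in N$ become asymptotically ``orthogonal'' in the AUC sense, yielding
\[
d(x,u)+d(y,v)=\norm{x-u}+\norm{y-v}\geq (1-\varepsilon)\bigl(\norm{x-y}+\norm{u-v}\bigr)
\]
once $u,v$ are chosen far out along the sequence. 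The AUC estimate with modulus equal to $t$ is what upgrades the trivial triangle inequality into the required lower bound with the factor $(1-\varepsilon)$.

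The hard part will be handling the finitely many pairs $x,y\in N$ uniformly at once, rather than one at a time, and making the passage from the infimum-over-finite-codimension formulation of $\delta_X$ to a usable statement about a concrete sequence $(u_k)$. The cleanest route is probably to pass to a subsequence of $(u_k)$ along which all the relevant vectors converge (weakly, or in the sense of a basis-like asymptotic structure), apply the definition of $\delta_X$ to each normalised difference $\frac{x-y}{\norm{x-y}}$ with $x\neq y\in N$ to get a common finite-codimensional subspace $Y$, and then select $u,v$ so that $u-v$ lies approximately in $Y$; the factor-$t$ equality in $\delta_X(t)=t$ then gives the inequality with loss controlled by $\varepsilon$. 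Once (\ref{hola3}) is established, the final sentence (that $\mathcal F(M)$ has an octahedral norm for every infinite $M\subseteq X$) is immediate from the equivalence (\ref{hola3})$\Leftrightarrow$(\ref{hola1}) in Theorem~\ref{t:circular}.
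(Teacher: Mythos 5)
Your overall strategy is viable and is essentially the ``forward'' version of what the paper does: reformulate $\delta_X(t)=t$ as the statement that for each $w\in X$ and $\eta>0$ there is a finite-codimensional $Y$ with $\norm{w+z}\geq(1-\eta)(\norm{w}+\norm{z})$ for all $z\in Y$ (this is the paper's Lemma~\ref{cuentasauc}; note you still owe the small convexity argument that upgrades the statement on $S_Y$ at the single scale $t=1$ to the homogeneous inequality for all $z\in Y$), intersect the finitely many such subspaces coming from the pairs of $N$, and then choose $u\neq v$ with $u-v$ almost in $Y$. The genuine gap is in that last step: you never actually produce such $u,v$, and the tool you lean on --- extracting a weakly convergent or ``asymptotically orthogonal'' subsequence --- is unavailable precisely in the model case $X=\ell_1$ (non-reflexive, and by the Schur property a bounded sequence with no norm-convergent subsequence has no weakly convergent subsequence either). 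Since $\ell_1$ is the motivating example of a space with $\delta_X(t)=t$, an argument that needs reflexivity or weak sequential compactness does not prove the proposition.

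The correct mechanism, which is what the paper uses, is finite-dimensional pigeonhole rather than weak compactness. First dispose of unbounded $M$ via Example~\ref{ejepropiP}(\ref{eje1}), so assume $M$ is bounded. Write $X=Y\oplus F$ with $F$ finite-dimensional and let $P,Q$ be the associated projections; $Q(M)$ is a bounded subset of $F$, hence totally bounded, so infinitely many points of $M$ have $Q$-images within $\eta$ of each other, and any two of them serve as $u,v$. Then $\norm{x-u}+\norm{y-v}\geq\norm{(x-y)-P(u-v)}-\norm{Q(u-v)}\geq(1-\varepsilon)(\norm{x-y}+\norm{u-v})-2\norm{Q(u-v)}$, and the error $2\norm{Q(u-v)}\leq 2\eta$ is absorbed because $\norm{x-y}\geq\min_{x\neq y\in N}d(x,y)>0$ for the finitely many pairs of distinct points of $N$ (the case $x=y$ being the triangle inequality). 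With this step supplied your direct argument closes, and it is in fact slightly cleaner than the paper's, which argues by contradiction and therefore needs the Ramsey-type Proposition~\ref{propiramsey} to reduce the failure of the LTP to a single bad pair $(x_0,y_0)$; verifying condition~(\ref{hola3}) of Theorem~\ref{t:circular} directly, as you propose, avoids that reduction because one simply intersects the finitely many finite-codimensional subspaces attached to the pairs of $N$.
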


This proposition generalises Example~\ref{ejepropiP}~(\ref{eje3}) as every $\Real$-tree isometrically embeds into $\ell_1$ of the corresponding density.
Even though the last claim seems to be quite natural, the only proof we know of is  in~\cite{JLPStrees} where it is proved in Proposition 4.1 that the notion of a separable $\Real$-tree coincides with the notion ``SMT'' introduced in that paper. 
It is proved in~\cite[Corollary~2.1]{JLPStrees} that every SMT embeds isometrically into $\ell_1$.
The non-separable case follows the same lines, using transfinite induction.

In the proof of Proposition~\ref{propauc} we shall need the following lemma.

\begin{lemma}\label{cuentasauc}
Let $X$ be a Banach space such that $\delta_X(1)=1$. Then, for every $x\in X$ and every $\varepsilon>0$ there exists a finite-codimensional subspace $Y\subseteq X$ such that, for every $y\in Y$, it follows
$$\Vert x+y\Vert\geq (1-\varepsilon)(\Vert x\Vert +\Vert y\Vert)\ \mbox{for all } y\in Y.$$
In particular, $\delta_X(t)=t$ holds for every $t>0$.
\end{lemma}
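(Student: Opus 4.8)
The plan is to reduce to the case $x\in S_X$ and then replace the purely asymptotic information carried by $\delta_X(1)=1$ with a single supporting functional, which will produce a lower bound that is linear in $\norm{y}$ and hence valid for vectors of every magnitude at once. First I would dispose of $x=0$ (take $Y=X$) and, for $x\neq 0$, normalise: writing $\hat x=x/\norm{x}$, any subspace $Y$ that works for $\hat x$ also works for $x$, since for $y\in Y$ one has $\norm{x+y}=\norm{x}\,\bigl\Vert \hat x+y/\norm{x}\bigr\Vert\geq \norm{x}(1-\varepsilon)(1+\norm{y}/\norm{x})=(1-\varepsilon)(\norm{x}+\norm{y})$, using $y/\norm{x}\in Y$. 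So I may assume $x\in S_X$. Next I would unwind the hypothesis: since $\norm{x+y}\leq 2$ for $x,y\in S_X$, the expression defining $\delta_X(1)$ never exceeds $1$, so $\delta_X(1)=1$ forces, for this fixed $x$, that $\sup_{\operatorname{codim}Y<\infty}\inf_{\hat y\in S_Y}\norm{x+\hat y}=2$. Hence there is a finite-codimensional $Y\subseteq X$ with $\norm{x+\hat y}\geq 2-\varepsilon$ for every $\hat y\in S_Y$.

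The key step is then the following. For each $\hat y\in S_Y$ I would choose, by Hahn--Banach, a norming functional $f\in S_{X^*}$ with $f(x+\hat y)=\norm{x+\hat y}\geq 2-\varepsilon$. Because $f(x)\leq 1$ and $f(\hat y)\leq 1$, the inequality $f(x)+f(\hat y)\geq 2-\varepsilon$ forces both $f(x)\geq 1-\varepsilon$ and $f(\hat y)\geq 1-\varepsilon$. Now, for an arbitrary nonzero $y\in Y$, write $y=t\hat y$ with $t=\norm{y}$ and $\hat y=y/\norm{y}\in S_Y$, and estimate
\[
\norm{x+y}\geq f(x+t\hat y)=f(x)+t\,f(\hat y)\geq (1-\varepsilon)(1+t)=(1-\varepsilon)(\norm{x}+\norm{y}).
\]
The functional $f$ depends on $\hat y$, but this is harmless, since for each fixed $y$ only one functional is needed. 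Together with the trivial case $y=0$, this establishes the displayed inequality.

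The ``in particular'' assertion I would obtain by reading the main statement backwards. Given $t>0$ and $\varepsilon>0$, the subspace $Y$ just produced satisfies $\norm{x+t\hat y}\geq(1-\varepsilon)(1+t)$ for all $\hat y\in S_Y$, hence $\sup_{\operatorname{codim}Y<\infty}\inf_{\hat y\in S_Y}\norm{x+t\hat y}-1\geq(1-\varepsilon)(1+t)-1$; letting $\varepsilon\to0$ and then taking the infimum over $x\in S_X$ gives $\delta_X(t)\geq t$, while $\delta_X(t)\leq t$ always holds.

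I expect the only genuine subtlety to be resisting the natural but doomed attempt to pass from the unit-sphere bound to smaller vectors via convexity of $s\mapsto\norm{x+s\hat y}$: convexity of this function together with $g(0)=1$, $g(1)\geq 2-\varepsilon$ yields the estimate for $t\geq 1$ but gives no useful lower bound for $t\in(0,1)$. The supporting-functional argument sidesteps the whole difficulty, because the affine minorant $t\mapsto f(x)+t\,f(\hat y)$ sits below the norm for every $t\geq 0$ simultaneously, so no separate treatment of small magnitudes is required.
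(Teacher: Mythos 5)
Your proof is correct, and its overall skeleton matches the paper's: reduce to $x\in S_X$, extract from $\delta_X(1)=1$ a finite-codimensional $Y$ with $\norm{x+\hat y}\geq 2-\varepsilon$ on $S_Y$, then upgrade this unit-sphere estimate to all of $Y$. The upgrading step, however, is done by a genuinely different device. You take a Hahn--Banach supporting functional $f$ at $x+\hat y$, note that $f(x)\geq 1-\varepsilon$ and $f(\hat y)\geq 1-\varepsilon$, and use the affine minorant $t\mapsto f(x)+tf(\hat y)$ to cover every magnitude $t=\norm{y}$ at once. The paper instead stays on the primal side: it shows by a triangle-inequality computation that every convex combination $t_1\frac{y}{\norm{y}}+t_2\frac{x}{\norm{x}}$ of the two unit vectors has norm at least $1-\varepsilon$ (writing $t_1y+t_2z=t_1(z+y)+(t_2-t_1)z$ with the larger coefficient in front of $z+y$), and then observes that $\frac{x+y}{\norm{x}+\norm{y}}$ is exactly such a convex combination. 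Both arguments are elementary and complete; yours is arguably slicker and makes transparent why no case distinction on $\norm{y}\lessgtr 1$ is needed, while the paper's is functional-free. Your closing remark about the doomed convexity argument concerns the one-variable function $s\mapsto\norm{x+s\hat y}$, which is indeed useless for $t<1$; note that the paper's convexity argument is of a different kind (convex combinations of the two unit vectors) and does not suffer from that defect. The derivation of the ``in particular'' clause is the same in both.
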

\begin{proof}
Pick $x\in X\setminus\{0\}$ and $\varepsilon>0$. Since $\delta_X(1)=1$, then there exists a finite-codimensional subspace $Y$ of $X$ such that, for every $y\in S_Y$, it follows
$$\left\Vert \frac{x}{\Vert x\Vert}+y\right\Vert\geq 2-\varepsilon.$$
Call $z=\frac{x}{\Vert x\Vert}$. Consider $t_1, t_2\in [0,1]$ such that $t_1+t_2=1$ and assume, with no loss of generality, that $t_1\geq t_2$. Then
\[\begin{split}
\Vert t_1 y+t_2 z\Vert& =\Vert t_1(z+y)+(t_2-t_1)z\Vert   \geq t_1\Vert z+y\Vert-(t_1-t_2)\\
& >t_1(2-\varepsilon)+t_2-t_1 \geq t_1+t_2-\varepsilon=1-\varepsilon.
\end{split} \]
Finally, given $y\in Y$, from the previous estimates we get
$$\frac{\Vert  x+y\Vert}{ \Vert x\Vert+\Vert y\Vert}=\left\Vert \frac{ \Vert x\Vert}{ \Vert x\Vert+\Vert y\Vert}\frac{ x}{\Vert x\Vert}+\frac{\Vert y\Vert}{\Vert x\Vert+\Vert y\Vert }\frac{y}{\Vert y\Vert} \right\Vert\geq 1-\varepsilon,$$
and the lemma follows.
\end{proof}

\begin{proof}[Proof of Proposition~\ref{propauc}]
In order to get a contradiction assume that there exists an infinite subset $A\subseteq X$ failing the LTP. By Proposition \ref{propiramsey} we can assume, with no loss of generality, that there are $\varepsilon_0>0$ and $x\neq y\in A$ such that, for every $u\neq v\in A$, we get
$$(1-\varepsilon_0)(\Vert x-y\Vert+\Vert u-v\Vert)>\min\{ \Vert x-u\Vert+\Vert y-v\Vert, \Vert x-v\Vert+\Vert y-u\Vert\}.$$
Since $\delta_X(1)=1$ we conclude the existence of a finite-codimensional subspace $Y\subseteq X$ such that, for all $z\in Y$, it follows
$$\Vert x-y+z\Vert\geq (1-\varepsilon)(\Vert x-y\Vert+\Vert z\Vert),$$
where $0<\varepsilon<\varepsilon_0$. Since $Y$ is finite-codimensional in $X$ we can find a finite-dimensional subspace $F\subseteq X$ such that $X=Y\oplus F$. Consider $P$ and $Q$ to be the corresponding linear and continuous projections onto $Y$ and $F$ respectively. Note that, since $F$ is finite-dimensional, $Q$ is bounded and $A$ is bounded then we can find $B\subseteq A$ such that, for every $u\neq v\in B$, we have that $\Vert Q(u-v)\Vert<\frac{\varepsilon_0-\varepsilon}{4}\Vert x-y\Vert$. Now, for fixed $u\neq v$ in $B$, we have
$$(1-\varepsilon_0)(\Vert x-y\Vert+\Vert u-v\Vert)>\min\{ \Vert x-u\Vert+\Vert y-v\Vert, \Vert x-v\Vert+\Vert y-u\Vert\}.$$
We can assume, with no loss of generality, that the following inequality holds:
\begin{equation}\label{primeineAUC}
(1-\varepsilon_0)(\Vert x-y\Vert+\Vert u-v\Vert)>\Vert x-u\Vert+\Vert y-v\Vert.
\end{equation}
Now
$$\Vert x-u\Vert+\Vert y-v\Vert\geq \Vert x-y-(u-v)\Vert=\Vert x-y-P(u-v)-Q(u-v)\Vert$$
$$\geq  \Vert x-y-P(u-v)\Vert-\Vert Q(u-v)\Vert.$$
Since $P(u-v)\in Y$ we conclude that $\Vert x-y-P(u-v)\Vert>(1-\varepsilon)(\Vert x-y\Vert+\Vert P(u-v)\Vert)$. Consequently, using this joint to (\ref{primeineAUC}), we get
$$(1-\varepsilon_0)(\Vert x-y\Vert+\Vert u-v\Vert)>(1-\varepsilon)(\Vert x-y\Vert+\Vert P(u-v)\Vert)-\Vert Q(u-v)\Vert.$$
Now, the triangle inequality implies that $\Vert u-v\Vert\leq \Vert P(u-v)\Vert+\Vert Q(u-v)\Vert$. Consequently, the previous inequalities imply
$$0\geq (\varepsilon_0-\varepsilon)(\Vert x-y\Vert+\Vert P(u-v)\Vert)-2\Vert Q(u-v)\Vert>\frac{\varepsilon_0-\varepsilon}{2}\Vert x-y\Vert,$$
which is a contradiction. Consequently, we conclude that there exists no subset $A$ of $X$ failing the LTP, so we are done.\end{proof}

\begin{remark}
Given a Banach space $X$ such that $\delta_X(1)=1$ then the metric graph $(M,d)$ can not be embedded in $X$ with distortion $D<2$ as a consequence of Lemma \ref{l:distortion}. We do not know whether this property actually characterises all the AUC Banach spaces with maximal modulus. We do not know either whether the converse of Proposition \ref{propauc} holds.
\end{remark}

\bigskip

\textbf{Acknowledgements}: 
We thank Luis Carlos Garc\'\i a Lirola for frequent conversations about this project.

The first author is grateful to Departamento de An\'alisis Metem\'atico de Universidad de Granada and the Instituto de Matem\'aticas de la Universidad de Granada (IEMath-GR) for the excellent working conditions during his visit in November 2016.

Part of this work was done when the second author visited the Laboratoire de Math\'ematiques de Besan\c{c}on, for which he was supported by a grant from Vicerrectorado de Internacionalizaci\'on y Vicerrectorado de Investigaci\'on y Transferencia de la Universidad de Granada, Escuela Internacional de Posgrado de la Universidad de Granada y el Campus de Excelencia Internacional (CEI) BioTic.

The second author is deeply grateful to the Laboratoire de Math\'ematiques de Besan\c{c}on for their hospitality during the stay.

\end{document}